\newtheorem{theorem}{Theorem}[section]
\newtheorem{lemma}[theorem]{Lemma}
\newtheorem{proposition}[theorem]{Proposition}
\newtheorem{remark}{Remark}[section]
\newtheorem{example}{Example}[section]
\newtheorem{definition}{Definition}
\newtheorem{result}{Result}
\def\a{{\alpha}}
\def\N{\mathbb{N}}
\def\Z{\mathbb{Z}}
\def\R{\mathbb{R}}
\def\C{\mathbb{C}}
\def\E{\mathbb{E}}
\def\P{\mathbb{P}}
\def\del{\delta}
\def\F{\mathcal{F}}
\def\t{\theta}
\def\la{\lambda}
\def\D{\mathcal{D}}
\def\eps{\epsilon}
\def\ze{\mathcal{Z}}
\def\fa{f_{\alpha}}
\def\ph{\varphi}
\def\S{\mathcal{S}}
\def\out{\mathrm{out}}
\def\ol{\overline}
\def\var{\mathrm{Var}}
\def\X{\mathbb{X}}
\def\N{\mathbb{N}}
\def\wt{\widetilde}
\def\F{\mathcal{Z}}
\def\out{\mathrm{out}}
\def\inn{\mathrm{in}}
\def\wt{\widetilde}
\def\la{\lambda}
\def\a{\alpha}
\def\ol{\overline}
\def\eps{\varepsilon}
\def \del{\delta}
\def \Z{\boldsymbol{Z}}
\def \X{\boldsymbol{X}}
\def\E{\mathbb{E}}
\def\ja{(j!)^{\alpha}}
\def\jh{(j!)^{\alpha/2}}
\def\L{\Delta}
\def\half{\frac{1}{2}}
\def\X{\mathcal X}
\def\Var{\mathrm{Var}}
\renewcommand{\l}[0]{\left }
\renewcommand{\r}[0]{\right}
\def\suchthat{\! : \! }
\def\Rig{\mathcal R}
\def\mb{\mbox}
\renewcommand\Z{\mathbb Z}
\def\uu{\mathbf u}
\newcommand\numberthis{\addtocounter{equation}{1}\tag{\theequation}}
\renewcommand*{\@cite@ofmt}{\hbox}
\begin{document}
\title{Rigidity hierarchy in random point fields: random polynomials and determinantal processes}
\author{
\begin{tabular}{c}
Subhroshekhar Ghosh \\ National University of Singapore \\ subhrowork@gmail.com
\end{tabular}
\and
\begin{tabular}{c}
Manjunath Krishnapur \\ Indian Institute of Science \\ manju@iisc.ac.in
\end{tabular}
}
\date{}
\maketitle
\begin{abstract} In certain point processes, the configuration of points outside a bounded domain determines, with probability 1, certain statistical features of the points within the domain. This notion, called rigidity, was introduced in \cite{GP}. In this paper, rigidity and the related notion of tolerance are examined systematically and  point processes with rigidity of various degrees are introduced. Natural classes of point processes such as determinantal point processes,  zero sets of Gaussian entire functions and perturbed lattices are examined from the point of view of rigidity, and general conditions are provided for them to exhibit specified nature of spatially rigid behaviour. In particular, we examine the rigidity of determinantal point processes in terms of their kernel, and demonstrate that a necessary condition for determinantal processes to exhibit rigidity is that their kernel must be a projection. We introduce a one parameter family of point processes which exhibit arbitrarily high levels of rigidity (depending on the choice of parameter value), answering a natural question on point processes with higher levels of rigidity (beyond the known examples of rigidity of local mass and center of mass). Our one parameter family is also related to a natural extension of the standard planar Gaussian analytic function process and their zero sets.
\end{abstract}

\section{Introduction}
 \label{introduction}

\subsection{Background} \label{sec:background}
A simple point process on $\R^{d}$ is a random locally finite subset of $\R^d$. To be precise, let $\mathcal{S}$ denote the space of locally finite point sets in $\R^d$. By identifying a set with its counting measure, $\mathcal S$ becomes an open subset of $\mathcal M_p$, the space of all integer-valued locally finite Borel measures on $\R^d$. Equipped with the vague topology, $\mathcal M_p$ turns out to be a Polish space. A random variable taking values in $\mathcal M_p$ (with probability $1$)  is called a point process, and a random measure that  takes values in $\mathcal S$ (with probability $1$) is called a simple point process. A slightly more general definition is given in Section~\ref{sec:setup}.  For more detailed discussion, we refer the interested reader to Chapter~1 of \cite{bbkbook} or the extensive works \cite{DV}, \cite{Ka}.  
 
The most well-studied models among point processes on Euclidean spaces are the Poisson point processes. In a Poisson process, for any Borel subset $D\subseteq \R^{d}$, the configuration of points of the point process that fall inside $D$ and the configuration of points that fall inside $D^{c}$ are independent. For a general point process such independence is no longer valid, and in fact, the study of spatial conditioning in such processes becomes a an important and highly non-trivial question. An extensive literature on the topic has developed over the years, including on tractable generalisations of independence such as Gibbs-type properties \cite{Ge}. 

In the work \cite{GP}, a surprising phenomenon that is in a sense diametrically opposite to independence was shown to occur in certain point processes -- namely, some features of the configuration of points inside $D$ (eg., the number of points that fall in $D$) may be determined, almost surely, by the configuration of points in $D^{c}$. This phenomenon was referred to as {\em rigidity} in \cite{GP}. In other words, some statistic $X$ that is a measurable function of the points in $D$ may also be shown to be a measurable function of the points in $D^c$ (in which case $X$ is said to be rigid). The precise definitions of rigidity and related concepts will be given later in the article; for the present introductory discussions we will content ourselves with the idea as explained above.

 It was shown in \cite{GP} that for the infinite Ginibre ensemble (recalled more precisely in Result~\ref{res:ginibre}) on $\R^{2}$, the number of points inside a bounded region $D$ is determined by the configuration of points outside $D$ (i.e., by the configuration of points in $D^c$). It was also shown that for the zeroes of the canonical Gaussian entire function (see Result~\ref{res:GEF}), the number of points as well as the center of mass of the points in a bounded region are determined by the configuration of points outside. In both cases it was shown, in a precise sense, that nothing more than this can be determined by the configuration outside.

The goal of this paper is to systematically explore the extent of rigidity in several natural point processes. In Section \ref{sec:setup}, we give definitions of rigidity and tolerance following \cite{GP}, but in a more general setting. In Section \ref{sec:models}, we recall the known examples and related results in the literature. Further, we mention certain applications to the study of point processes to which these notions have been crucial. In Section~\ref{sec:ourresults} we state the new results of this paper. In subsequent sections we give proofs of these results.

\subsection{ Setup and fundamental notions}  \label{sec:setup}
In this section, we give definitions of rigidity and tolerance following \cite{GP}, albeit in a more general setting. 

The definition of point process on $\R^d$ recalled in the introduction generalizes in a straightforward way to point processes on any locally compact, second countable Hausdorff space (or l.c.s.h space in short).  Let  $\Xi$ be a l.c.s.h. space. Let $\mathcal M$ denote the space of Radon measures on $\Xi$, a Polish space with the topology of vague convergence. Let $\mathcal M_{p}\subseteq \mathcal M$ denote the closed subset  of integer-valued Radon measures on $\Xi$, and let $\S\subseteq \mathcal M_{p}$ denote the relatively open subset  of counting measures of locally finite subsets of $\Xi$.  As Borel subsets of $\mathcal M$, both $\mathcal M_p$ and $\mathcal S$ inherit Borel sigma-algebras. A random variable $\Pi$ defined on some probability space $(\Omega,\mathcal F,\P)$ is called a {\em random measure} if it takes values  in $\mathcal M$, a {\em point process} if it takes values (with probability 1) in $\mathcal M_p$ and a {\em simple point process} if it takes values (w.p. 1) in $\S$.  For more details on point processes, we refer the reader to \cite{bbkbook} (Chapter~1), \cite{DV}, \cite{Ka}.

If $D\subseteq \Xi$ is a Borel set, let $\S_{D}=\{\theta \in \S \suchthat \theta(D^{c})=0\}$. For a point process $\Pi$ on $\Xi$, let $\Pi_{D}$ denote the restriction of $\Pi$ to $D$, i.e., the point process on $\Xi$ satisfying $\Pi_{D}(A)=\Pi(A\cap D)$.  Let $\sigma\{\Pi_{D}\}$ denote the sigma-algebra generated by $\Pi_{D}$. Random variables measurable with respect to $\sigma\{\Pi_{D}\}$ are precisely those of the form $f(\Pi_{D})$ where $f:\S_{D}\mapsto \R$ is  measurable. 

The key  object in our study is the sigma-algebra $\Rig_{D}:=\overline{\sigma(\Pi_{D})}\cap \overline{\sigma(\Pi_{D^{c}})}$ where $\overline{\mathcal F}$ denotes the completion of a sigma-algebra $\mathcal F$. If $Y$ is an $\Rig_{D}$-measurable random variable, then there exist measurable functions $f_{\inn}:\S_{D}\mapsto \R$ and
$f_{\out}:\S_{D^{c}}\mapsto \R$ such that $Y=f_{\inn}(\Pi_{D})$ $a.s.$ and $Y=f_{\out}(\Pi_{D^{c}})$ $a.s.$ Thus, $\Rig_{D}$-measurable random variables are precisely those features of $\Pi_{D}$  which can be inferred exactly (with probability one), from a knowledge of the configuration $\Pi_{D^{c}}$.
\begin{definition}[Rigidity]
A measurable function $f:\S_{D}\mapsto \R$ is said to be rigid for the pair $(\Pi,D)$ if $f(\Pi_{D})$ is $\Rig_{D}$-measurable.
\end{definition}
Clearly, this is interesting only if $f(\Pi_{D})$ is not a constant. In that case,  $\Rig_{D}$ contains events with non-trivial probability.

 The most basic feature of interest is the number of points in $D$, i.e., $N_{D}:\S_{D}\to \R$ defined as $N_{D}(\Pi)=\Pi(D)$. If $N_{D}$ is rigid for all pre-compact $D\subseteq \Xi$, then we say that $\Pi$ exhibits {\em rigidity of numbers}.

Once we establish that there are rigid functions, the natural question is, how many? This is captured by the following definition. We write  $\sigma\{Y_{i}\suchthat i\in I\}$ for the sigma-algebra generated by  $Y_{i}$, $i\in I$.
\begin{definition}[Tolerance]\label{def:tolerance} Let $f_{1},\ldots ,f_{m}:\S_{D}\mapsto \R$ be measurable functions. We say that the pair $(\Pi,D)$ is tolerant subject to $\{f_{1},\ldots ,f_{m}\}$ if $\sigma\{f_{1}(\Pi_{D}),\ldots ,f_{m}(\Pi_{D})\}$ and $\Rig_{D}$ are equal up to $\P$-null sets.
\end{definition}
The definition entails that each $f_{i}$ is rigid for $(\Pi,D)$ and for every $A\in \Rig_{D}$, there is a $B\in \sigma\{f_{1}(\Pi_{D}),\ldots ,f_{m}(\Pi_{D})\}$ such that $\P(A\Delta B)=0$. Of course, we may allow $m=\infty$ and make the same definition. This definition captures the idea that nothing more than $f_{i}(\Pi_{D})$, $i\le m$, can be determined by the outside configuration $\Pi_{D^{c}}$. Or more precisely, any feature of the point process inside $D$ that can be determined by the outside configuration is equal (almost surely) to a function of these $m$ features.

However, there are other ways to capture the same idea. In most of the cases that we encounter, the following stronger form of tolerance holds.
\begin{definition}[Tolerance - strong form] \label{def:tolerancestrong2}
In the language of Definition~\ref{def:tolerance}, we say that $(\Pi,D)$ is strongly tolerant  subject to $\{f_{1},f_{2},\ldots ,f_{m}\}$ if, almost surely, the conditional distribution of $\Pi_{D}$ given $\Pi_{D^{c}}$ is mutually absolutely continuous to the conditional distribution of $\Pi_{D}$ given $\sigma\{f_{1}(\Pi_{D}),\ldots ,f_{m}(\Pi_{D})\}$.
\end{definition}
There are other possible definitions of tolerance that lie between these two. In section~\ref{sec:thiswillberewritten} we discuss these and also explain the relationship between tolerance and strong tolerance. In particular, it is shown that strong tolerance implies tolerance, as the name suggests.

In general, the extent of rigidity could depend not only on the underlying point process but also on the sub-domain $D$. In almost all cases considered in this paper, the point process is on $\R^{2}$ and the extent of rigidity is the same for all bounded subsets $D$.   An exception is in non-projection determinantal processes of Theorem~\ref{detrig}, see Remark~\ref{rem:inclusion}.

\subsection{Existing literature on rigidity and tolerance}  \label{sec:models}
In this section, we discuss significant strands of results available in the literature related to the topic of rigidity and tolerance, and applications thereof.
\subsubsection{Known models of rigidity and tolerance}
We begin with surveying available results on rigidity and tolerance phenomena for standard point processes.
A Poisson process has no rigidity. In other words, $\Rig_{D}$ is trivial for any $D$. It is interesting and non-trivial that there are point processes with any rigidity at all. We quote three sample results. The first two are from \cite{GP}  while the last is from \cite{G-1}.

{\it  Infinite Ginibre ensemble in the complex plane}: This is the determinantal point process defined by the kernel $e^{z\bar{w}}$ with respect to the measure $\frac{1}{\pi}e^{-\frac12 |z|^{2}}$ on the complex plane. For the definition of a determinantal point process, we refer the reader to \cite{soshnikov} or chapter 4 of \cite{HKPV} (we give a brief description in Section~\ref{sec:ourresults}). The infinite Ginibre ensemble is a translation and rotation invariant simple point process on the plane.
\begin{result}[\cite{GP}] \label{res:ginibre}The infinite Ginibre ensemble exhibits rigidity of numbers. Furthermore, the infinite Ginibre ensemble is strongly tolerant subject to the number of points for any bounded domain $D \subset \R^2$.\emph{}
\end{result}
 Informally, the result says that for any bounded set $D$, from the  knowledge of $\Pi_{D^{c}}$, it is possible to determine the number of points in $\Pi_{D}$, but nothing more can be determined.

{\it Zero set of the canonical Gaussian entire function}: Let $\Pi$ be the zero set of the  random entire function $\sum_{n=0}^{\infty}\xi_{n}\frac{z^{n}}{\sqrt{n!}}$ where $\xi_{n}$ are i.i.d. standard complex Gaussian random variables (each $\xi_{n}$ has density $\frac1\pi e^{-|z|^{2}}$). Then, $\Pi$ is a translation and rotation invariant simple point process on the plane (see \cite{ST1} for details); for more on translation invariant zero sets of Gaussian analytic functions, see e.g. \cite{Fe}.

\begin{result}[\cite{GP}] \label{res:GEF} The zero set of the Gaussian entire function exhibits rigidity of the number and the center of mass of the points in any bounded domain $D \subset \R^2$.  Furthermore, for any bounded domain $D \subset \R^2$, this zero process is strongly tolerant subject to the number and the center of mass of the points.\emph{}
\end{result}
Informally, this means that for any bounded set $D$, from the knowledge of $\Pi_{D^{c}}$, it is possible to determine the number of points in $\Pi_{D}$ and the center of mass of these points, but nothing more.

{\it Sine-kernel determinantal point process on $\R$}: This is the determinantal point process with kernel $\frac{\sin(x-y)}{x-y}$ with respect to the Lebesgue measure on $\R$. This is a translation invariant simple point process (see section~2 of \cite{soshnikov}) that arises in the study of Hermitian random matrices.

\begin{result}(\cite{G-1}) \label{res:sinekernel} The sine-kernel determinantal point process exhibits rigidity of numbers.
\end{result}
A similar result was shown for a wide class of translation  invariant determinantal point processes on the real line. These determinantal processes (see Soshnikov~\cite{soshnikov} and Lyons and Steif~\cite{LySt}) have kernels of the form  $\hat{g}(x-y)$ where $g$ is the indicator of a subset of $\R^d$ having finite positive Lebesgue measure, see \cite{G-1} for details. 
Number rigidity for sine beta processes was proved in \cite{DHLM}.

Bufetov~\cite{buf-3} demonstrated rigidity of numbers for determinantal point processes  with Airy, Bessel and Gamma kernels. These point processes are not translation invariant. In addition, Bufetov and Qiu~\cite{buf-1}, have shown rigidity for a class of determinantal processes in the plane. Their result has an overlap with, but does not subsume, Theorems~\ref{compact} and \ref{dpprt} that we prove in this paper. In addition, it is worth mentioning that in \cite{buf-2}, Bufetov, Dabrowski and Qiu study one-sided rigidity. In our terminology, this corresponds to taking $\Xi=\R$ and $D=[a,\infty)$. This result may be the only known result that proves rigidity for unbounded sets, although one may mention that analogous results (theorems of Szeg\"{o} and Kolmogorov on the complete predictability of a Gaussian process from its past) are known in the theory of Gaussian processes, see~ \cite{DymMackean}. Rigidity phenomena have subsequently been considered widely general set-ups, mostly in the case of determinantal processes, see e.g. Qiu (\cite{Qi})  and Hiraoka, Shirai and Trinh (\cite{HST}).

\subsubsection{Related literature} In \cite{OsSh}, Osada and Shirai showed that for the Ginibre ensemble, the Palm measures with respect to different point sets are mutually absolutely continuous if the conditioning set of points have the same cardinality, and are mutually absolutely continuous otherwise. Such dichotomy is similar in spirit to, and is, in fact, closely related with the rigidity phenomena under our consideration. There has been recent interest in examining the extent to which rigidity can occur in stochastic systems, including applications to spectrally constrained random systems, \textit{stealthy} models, and maximal rigidity; see e.g. the recent works \cite{GhL, AGL} and also the paper \cite{KiNi} that are pursuant to an earlier arxiv version of our present paper \cite{GhK}. For connections between the strength of rigidity in a point process and its \textit{Palm measures}, we refer the reader to \cite{G-2}. In the more general area of spatial conditioning in point processes, important progress has been obtained recently in \cite{BQS} in the setting of determinantal processes, which also establishes the Lyons-Peres completeness conjecture. 

\subsubsection{Applications to other aspects of point processes:} The study of rigidity, although natural, is not an end in itself. Rigidity phenomena have  been exploited to answer questions on point processes and stochastic geometry that are interesting in their own right.

In \cite{G-1}, rigidity of the sine kernel process (Result~\ref{res:sinekernel}) was exploited to answer a completeness question related to exponential functions coming from the point process. In \cite{GKP}, the authors used an understanding of the rigidity behaviour of the Ginibre ensemble and the zero set of the Gaussian entire function   (Result~\ref{res:ginibre} and Result~\ref{res:GEF}) in order to study continuum percolation on these models. In particular, they showed the  existence of a non-trivial critical radius for percolation, and established the uniqueness of the infinite cluster in the supercritical regime.

In \cite{Os}, an understanding of  quasi-Gibbs property, which has a somewhat similar  flavour to rigidity and tolerance, was used to define an infinite particle SDE for invariant dynamics on the Ginibre process. To execute a similar programme for invariant dynamics on the zero set of the Gaussian entire function, one faces new challenges involving the higher level of rigidity in Result~\ref{res:GEF}, and it is a topic of current research.

\section{Our results}\label{sec:ourresults}
Now we present our results along with some remarks on our motivations and on what more could be true but are not known.

\subsection{Point processes with higher levels of rigidity and $\a$-GAFs} \label{sec:higher_rig}
The Results~\ref{res:ginibre} and \ref{res:GEF} demonstrate point process that are rigid only with respect to the local mass or local mass and center of mass respectively. The primary  question that we explore in this section is whether there are there natural point processes that exhibit higher levels of rigidity.  This could, for example, entail that for some $k\ge 3$ and any bounded set $D$, the $k$-th moment (i.e., the sum of $k$th powers) of the points of the process that fall in $D$ can be determined by the outside configuration $\Pi_{D^{c}}$.

We answer this natural  question in the affirmative, by constructing zero sets of Gaussian entire functions with arbitrarily high degrees of rigidity. To this end, we first define $\a$-Gaussian analytic functions (\textit{abbrv.} $\a$-GAF).

\begin{definition}
  For a real number $\a>0$,  the $\a$-GAF is the random entire function  \[f_\a(z):=\sum_{k=0}^\infty \frac{\xi_k}{(k!)^{\a/2}}z^k\]
  where $\xi_{k}$ are i.i.d. standard complex Gaussian random variables.
 \end{definition}
For $\a=1$, we are reduced to the canonical planar Gaussian entire function whose zero set is a translation-invariant point process (see Proposition~2.3.4. in \cite{HKPV}). For any $\a\not=1$, the zero set of $\a$-GAFs is not translation-invariant in the plane. Figure~\ref{fig:alphagafzerosets} demonstrates how the parameter $\alpha$ affects the distribution of the zeroes.


For stating our main theorem of this section, we recall that the (holomorphic) $p$-th moment (or the moment of order $p$, for any integer $p \ge 0$) of a (finite) point set $\{z_1,\cdots,z_m\} \subset \R^2=\C$ is given by $z_1^p + \cdots +z_m^p$, where the $z_i$ are treated as complex numbers. By the first $p$ moments of such a point set, we imply the set of its moments of order $\{0,1,\ldots,p-1\}$.

 \begin{theorem}
  \label{hierig}
  The zero set of the $\a$-GAF exhibits rigidity of the first $k$ moments of its points in any bounded domain $D \subset \R^2$ whenever $\a \in (\frac{1}{k} , \frac{1}{k-1}]$.
 \end{theorem}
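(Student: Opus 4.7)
The plan is to follow the Ghosh-Peres paradigm developed for the canonical GAF and adapt it to the inhomogeneous $\a$-GAF. Rigidity at level $k$ decomposes into two pieces: $\Rig_D$-measurability of each power-sum $M_j(\Pi_D)$ for $0\le j\le k-1$, and strong tolerance subject to $\{M_0,\ldots,M_{k-1}\}$. For the rigidity part, I would construct, for each such $j$, a family of smooth compactly supported test functions $\phi_{j,R}:\C\to\C$ satisfying $\phi_{j,R}\equiv z^j$ on $D$ and
\[
\mathrm{Var}\!\left(\sum_{\zeta\in\Pi}\phi_{j,R}(\zeta)\right)\longrightarrow 0 \quad \text{as } R\to\infty.
\]
A Borel-Cantelli argument along a subsequence where these variances are summable then gives a.s.\ convergence of the centered linear statistic to $0$, and splitting $\sum_\zeta\phi_{j,R}(\zeta)$ into its $\Pi\cap D$ and $\Pi\cap D^c$ parts exhibits $M_j(\Pi_D)$ as an a.s.\ limit of $\sigma(\Pi_{D^c})$-measurable random variables, hence $\Rig_D$-measurable.

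The variance is analysed via the Sodin-Tsirelson formula, which for a Gaussian analytic function $f$ with zero set $\Pi$ and normalized covariance $\tilde K(z,w)=\E[f(z)\overline{f(w)}]/\sqrt{\E|f(z)|^2\E|f(w)|^2}$ reads
\[
\mathrm{Var}\!\left(\sum_\zeta\phi(\zeta)\right) = \frac{1}{4\pi^2}\sum_{m\ge 1}\frac{1}{m^2}\iint \Delta\phi(z)\,\overline{\Delta\phi(w)}\,|\tilde K(z,w)|^{2m}\,dm(z)\,dm(w).
\]
Saddle-point analysis of $K_\a(z,z)=\sum_n |z|^{2n}/(n!)^\a$ yields $\log K_\a(z,z)\sim c_\a|z|^{2/\a}$, so the first zero intensity satisfies $\rho_\a(z)=\tfrac{1}{4\pi}\Delta\log K_\a(z,z)\asymp|z|^{2/\a-2}$, and a Taylor expansion near the diagonal produces the bulk Gaussian bound $|\tilde K_\a(z,z+h)|^2\lesssim \exp(-c\,\rho_\a(z)\,|h|^2)$, making the effective correlation length at scale $r$ of order $r^{1-1/\a}$. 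Taking $\phi_{j,R}(z)=z^j\psi_R(z)$ with $\psi_R$ a smooth radial cutoff that equals $1$ on a fixed neighbourhood of $D$ and vanishes outside $B(0,2R)$, holomorphicity of $z^j$ confines $\Delta\phi_{j,R}$ to an annulus $A_R\subset\{R\le|z|\le 2R\}$ with $|\Delta\phi_{j,R}|\asymp R^{j-2}$; combining this with the density and correlation-length scalings gives a naive variance bound of order $R^{2(j-1/\a)}$, which already suffices for $j\le k-2$ under the hypothesis $\a\in(1/k,1/(k-1)]$.

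The main obstacle is the marginal case $j=k-1$ at the right endpoint $\a=1/(k-1)$, where $1/\a=k-1$ and the crude bound is only $O(1)$. I would close this gap by refining the test function: either with a telescoping dyadic decomposition of $\psi_R$ across $\log R$ scales, yielding an extra $(\log R)^{-1}$ saving, or by choosing $\psi_R$ to solve an auxiliary Laplace-type equation on the annulus so that the leading piece of $\Delta\phi_{j,R}$ is cancelled; either route requires also controlling the $m\ge 2$ terms in the Sodin-Tsirelson expansion, which are tamed by the factor $1/m^2$ together with the sharper decay of $|\tilde K_\a|^{2m}$. Finally, for strong tolerance, I would use a Palm-theoretic description of the conditional law of $\Pi_D$ given $\Pi_{D^c}$ in terms of the restriction of $f_\a$ to $D$; together with an explicit computation of the Jacobian of the moment map $(z_1,\ldots,z_n)\mapsto(M_0,\ldots,M_{k-1})$, this yields an almost-everywhere strictly positive conditional density of $\Pi_D$ with respect to Lebesgue measure on the moment-constrained submanifold of $D^{N_D}$, giving the mutual absolute continuity required by Definition~\ref{def:tolerancestrong2}.
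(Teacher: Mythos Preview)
Your proposal is correct and follows essentially the same strategy as the paper: the Ghosh--Peres paradigm, Sodin--Tsirelson variance formula, saddle-point asymptotics of $K_\a(z,z)=\sum_n|z|^{2n}/(n!)^\a$, and test functions of the form $z^j$ times a radial cutoff.

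The one noteworthy technical difference is how the marginal moment $j=k-1$ is handled. You first try a standard cutoff supported on $R\le|z|\le 2R$, obtain the crude bound $R^{2(j-1/\a)}$, observe this fails only at the right endpoint $\a=1/(k-1)$, and then propose a secondary refinement (dyadic telescoping or a PDE trick) to recover an extra logarithmic gain. The paper instead takes a logarithmically spread cutoff from the outset: a radial $\ph_\eps$ equal to $1$ on a neighbourhood of $D$, supported on $\{|z|\le 2r_0 e^{2/\eps}\}$, and obeying $|\ph_\eps'(r)|\le \eps/(2r)$, $|\ph_\eps''(r)|\le \eps/(2r^2)$. With $\Phi^\eps(z)=z^j\ph_\eps(z)$ one gets $|\Delta\Phi^\eps|=O(\eps r^{j-2})$ on the full logarithmic annulus, and since $2j-2/\a\le 0$ the radial integrand is at worst $r^{-1}$, giving $\mathrm{Var}\le C\eps$ uniformly over all $j\le k-1$ including the endpoint. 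This is exactly what your dyadic refinement would eventually produce, but it avoids the case split. The paper also short-circuits your worry about the $m\ge 2$ terms in the Sodin--Tsirelson series by the elementary sandwich $|\theta_\a|^2\le\sum_{m\ge 1}|\theta_\a|^{2m}/m^2\le 2|\theta_\a|^2$, reducing everything to $|\theta_\a|^2$ with absolute values inside the double integral. For tolerance, the paper, like you, defers the heavy lifting to the machinery of \cite{GP}, so your sketch is consistent with what is actually done.
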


In the forthcoming paper \cite{G-3}, we investigate and answer in the affirmative the question of the strong tolerance of the $\a$-GAF zeros given the first $k$ moments of its points in a bounded domain (where $\a \in (\frac{1}{k} , \frac{1}{k-1}]$, as in the statement of Theorem \ref{hierig}).

\subsection{Rigidity of determinantal point processes} \label{sec:DPP}
Result~\ref{res:ginibre} and Result~\ref{res:sinekernel} exhibit determinantal point processes on $\R^{2}$ and on $\R$ that exhibit rigidity of numbers, and strongly tolerant given the number of points (i.e., for any bounded set $D$, the number of points of the process that fall inside $D$  is determined by the outside configuration $\Pi_{D^{c}}$, but nothing more). The natural question that arises pertains to the rigidity and tolerance properties exhibited a general determinantal point process. To articulate our result, we first recall the precise notion of a determinantal point process.

A determinantal point process (henceforth abbreviated as d.p.p.) on $\Xi$ with a Hermitian kernel $K:\Xi\times \Xi\to \C$ (Hermitian means that $K(x,y)=\overline{K(y,x)}$) and a (Radon) measure $\mu$, is a point process whose joint intensities (also called correlation functions)  with respect to $\mu$ are given by
\begin{align*}
\rho_{k}(x_{1},\ldots ,x_{k}) = \det\l(K(x_{i},x_{j})\r)_{i,j\le k}
\end{align*}
for any $k\ge 1$ and any $x_{1},\ldots ,x_{k}\in \Xi$. We refer to \cite{HKPV}, chapters 1 and 4, for more details of these notions.  All information about a determinantal process is in the kernel $K$, once the  measure $\mu$ is fixed. For $D\subseteq \Xi$, we define the integral operator $\mathcal K_{D}:L^{2}(D,\mu)\mapsto L^{2}(D,\mu)$ by
\[
(\mathcal K_{D} f)(x)=\int_{D}K(x,y)f(y)d\mu(y).
\]
We say that the kernel $K$ is locally of trace-class if for every $D\subseteq \C$ that is pre-compact, the operator $\mathcal K_{D}$ is of trace-class. We write $\mathcal K$ for $\mathcal K_{\Xi}$.

It is well known that the kernel and background measure completely determine the distribution of a determinantal point process (Lemma~4.2.6 in \cite{HKPV}). Hence in principle, the rigidity properties of a determinantal process are encoded in its kernel and background measure, possibly in a highly indirect and complicated manner. 

A natural problem, therefore, is to understand the rigidity and tolerance behaviour of a determinatal point process in the form of a simple criterion expressed in terms of the kernel.  We provide a partial answer to this question by specifying conditions on the pair $(K,\mu)$ that mandate a certain rigidity behaviour of the determinantal process. More specifically, we investigate conditions under which a determinantal process has rigidity of numbers. Recall that this means that for any $D\subseteq \Xi$ whose closure is compact, the counting function $N_{D}:\S_{D}\mapsto \R$, is rigid.

To state our  result, we recall the following result of Machchi and Soshnikov (see Theorem~3 in Soshnikov~\cite{soshnikov}): Let $K$ be locally of trace-class and Hermitian. Then, a determinantal point process with the pair $(K,\mu)$ exists if and only if $0\le \mathcal K\le I$. Equivalently, the spectrum of $\mathcal K$ is contained in $[0,1]$. Clearly $\mathcal K$ defines a projection operator if and only if its spectrum is contained in $\{0,1\}$.

Now we are ready to state a necessary condition for a determinantal point process to exhibit rigidity of numbers.
 \begin{theorem}
  \label{detrig}
  Let $\Pi$ be a determinantal point process with kernel $K$ and background measure $\mu$. Then $\Pi$ exhibits rigidity of numbers only if the integral operator $\mathcal K$ associated to $K$ is a projection operator.
  \end{theorem}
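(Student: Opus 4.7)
My plan is to argue by contradiction: assume $\Pi$ has rigidity of numbers and deduce $\mathcal K^2=\mathcal K$. The argument will break into a variance identity for linear statistics, a variance lower bound that rules out linear approximations to $N_D$ from outside, and then an upgrade from linear to arbitrary measurable functions of $\Pi_{D^c}$ via the mixture representation of DPPs.

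For the variance identity and linear lower bound, I would start from the DPP covariance identity $\rho_2(x,y)-\rho_1(x)\rho_1(y)=-|K(x,y)|^2$ together with the spectral pointwise identity $K(x,x)=\int |K(x,y)|^2\,d\mu(y)+(\mathcal K(I-\mathcal K))(x,x)$; after symmetrising in $x,y$ these yield, for any $\varphi$ for which both sides make sense,
\[
\mathrm{Var}\!\left(\int\varphi\,d\Pi\right) = \tfrac12\iint (\varphi(x)-\varphi(y))^2 |K(x,y)|^2\,d\mu(x)d\mu(y) + \int \varphi(x)^2(\mathcal K(I-\mathcal K))(x,x)\,d\mu(x),
\]
both summands being non-negative. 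Since $\mathcal K\ne \mathcal K^2$, the non-negative operator $\mathcal K(I-\mathcal K)$ is nonzero, so its diagonal is positive on a set of positive $\mu$-measure (otherwise the whole operator would vanish), and I would then pick a pre-compact $D$ with $c_D := \int_D (\mathcal K(I-\mathcal K))(x,x)\,d\mu > 0$. For any linear statistic $L=\int\psi\,d\Pi_{D^c}$ with $\psi$ supported outside $D$, setting $\varphi:=\mathbf{1}_D-\psi\mathbf{1}_{D^c}$ makes $\varphi\equiv 1$ on $D$, and the identity gives
\[
\|N_D - L - \mathbb E[N_D-L]\|_{L^2}^2 \;=\; \mathrm{Var}\!\left(\int\varphi\,d\Pi\right) \;\geq\; c_D.
\]
Hence $N_D$ is bounded away in $L^2$ from the closure of linear statistics of $\Pi_{D^c}$.

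This alone is not yet a contradiction, because rigidity requires only $N_D\in L^2(\sigma(\Pi_{D^c}))$, and that space properly contains the closure of linear statistics (it also contains products and Boolean functions of counts, for instance). To close the argument I would invoke the Hough--Krishnapur--Peres--Virág mixture representation: diagonalise $\mathcal K=\sum_i \lambda_i |\phi_i\rangle\langle\phi_i|$ and realise $\Pi$, conditional on independent Bernoullis $B_i\sim\mathrm{Bern}(\lambda_i)$, as the projection-kernel DPP $\Pi^B$ with kernel $\sum_i B_i |\phi_i\rangle\langle\phi_i|$. Since $\mathcal K$ is not a projection some $\lambda_{i_0}\in(0,1)$, and I would pick $D$ so that $\phi_{i_0}$ has positive mass on both $D$ and $D^c$. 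Conditioning on $B_{i_0}=0$ versus $B_{i_0}=1$ produces two projection DPPs whose kernels differ by the rank-one orthogonal projection onto $\phi_{i_0}$; their marginals on $D^c$ will be mutually absolutely continuous (by a Fredholm-determinant comparison, or via the Osada--Shirai absolute-continuity theorem for Palm measures), while their conditional means of $N_D$ given $\Pi_{D^c}$ differ in average by $\int_D |\phi_{i_0}|^2 \,d\mu > 0$. Averaging over $B_{i_0}$ would then make the conditional law of $N_D$ given $\Pi_{D^c}$ non-degenerate on a set of positive probability, contradicting rigidity. The hard part will be precisely this last step---pushing the unconditional mean shift and the mutual absolute continuity through the conditioning on $\Pi_{D^c}$---which is where the Fredholm-determinant or Palm-measure machinery must do the real work.
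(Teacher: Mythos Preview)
Your second paragraph has the right instinct but two genuine gaps, and the paper's proof shows how to sidestep both. First, writing $\mathcal K=\sum_i\lambda_i|\phi_i\rangle\langle\phi_i|$ and invoking the HKPV Bernoulli mixture presupposes pure point spectrum; the hypothesis is only that $\mathcal K$ is locally trace-class and not a projection, and the paper explicitly notes that $\mathcal K$ may have continuous spectrum, so no such expansion need exist. Second, even when an eigenvalue $\lambda_{i_0}\in(0,1)$ is available, the step you flag as hard really is hard and you have not supplied it: mutual absolute continuity on $D^c$ of two projection DPPs whose kernels differ by a rank-one projection is not what Osada--Shirai proves (that result is about Palm measures of the Ginibre ensemble), and a general Fredholm-determinant argument here is far from routine. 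Your first paragraph (the variance identity and the linear lower bound) is correct, but as you yourself note it does not close the argument, and it plays no role in the actual proof.

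The paper avoids both difficulties with a short coupling argument. Since $\mathcal K$ is not a projection, the spectral projection $E[\delta,1-\delta]$ is nonzero for some $\delta>0$; pick any $f$ in its range with $\|f\|^2=\delta$ and set $L_0=K-\tfrac12 f\otimes f^*$, $L_1=K+\tfrac12 f\otimes f^*$. The choice of $f$ forces $0\le\mathcal L_0\le\mathcal L_1\le I$, so both are valid DPP kernels, and Lyons' monotone coupling produces $\Pi_0\subseteq\Pi_1$ with $|\Pi_1\setminus\Pi_0|\le 1$. Letting $\xi$ be an independent fair coin and setting $\Pi=\Pi_\xi$, a one-line determinant identity $\tfrac12\det M+\tfrac12\det(M+\mathbf u\mathbf u^*)=\det(M+\tfrac12\mathbf u\mathbf u^*)$ shows $\Pi$ has kernel $K$. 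On the positive-probability event that the extra point lies in a suitable precompact $D$, the configurations $\Pi_0$ and $\Pi_1$ are \emph{identical} on $D^c$, yet $N_D$ changes by one with the value of $\xi$; this kills rigidity of numbers with no absolute-continuity comparison needed at all.
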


\begin{remark} \label{rem:inclusion} 
The statement of Theorem \ref{detrig} primarily entails the contrapositive of being number rigid, in the sense that, if $\mathcal K$ is not a projection operator, then there exists a  precompact Borel set $D$ (possibly depending on $\mathcal K$) such that the point count in $D$ (i.e., $N_D(\Pi)$) is not rigid. However, it may be easily  seen that, if the point count in a domain $D_1$ (i.e., $N_{D_1}(\Pi)$)  is not rigid, then the same will hold true for the point count $N_{D_2}(\Pi)$ for any domain $D_2$ such that $D_1 \subseteq D_2$ (for a detailed argument, see Lemma \ref{lem:inclusion} in Section \ref{sec:thiswillberewritten}). 

However, it is not true in general that  rigidity of numbers holds for all bounded sets.  For example, let  $D_0$ be a bounded open set in $\R^d$, and suppose that $\mathcal K_{D_0}$ is a projection  of finite rank $n$ and that $\mathcal K_{D_0^c}$ is not a projection. Examples are easy to construct, for example by taking  finite rank projection operators $\mathcal K_1$ and $\mathcal K_2$ on $L^2(D_0)$ and $L^2(\D_0^c)$ respectively, and setting $\mathcal K=\mathcal K_1\oplus \frac12 \mathcal K_2$ (the direct sum; in terms of the associated kernels, $K=K_1$ one $D_0$ and $K=\frac12 K_2$ on $D_0^c$). Then $\mathcal K$ is a not a projection, and the corresponding DPP does fail to have number rigidity for some $D$. However, $\Pi$ has exactly $n$ points in $D_0$ and hence for any $D\subseteq D_0$, we do have number rigidity, since $N_D(\Pi)=n-N_{D^c\cap D_0}(\Pi)$, which is measurable w.r.t. $\sigma(\Pi_{D^c})$. 
\end{remark}

It is an interesting question as to whether, conversely, every projection determinantal point process exhibits rigidity of numbers. This is certainly true for  the infinite Ginibre ensemble (Result~\ref{res:ginibre}) and for the sine-kernel process on the line (Result~\ref{res:sinekernel}), but is not true in complete generality.

A counter-example is provided by the  Bergman determinantal point process, which is a determinantal process on the unit disk $\mathbb D=\{z\in \mathbb C \! : \! |z|<1\}$ whose kernel is the Bergman kernel $K(z,w)=(1-z\bar{w})^{-2}$ and  $d\mu(z)=\pi^{-1}dm(z)$ is the normalized Lebesgue measure on the unit disk. The corresponding operator is the projection from $L^{2}(\mu)$ onto the subspace of square integrable holomorphic functions. Holroyd and Soo~\cite{HS} have shown that this determinantal process is  insertion tolerant and deletion tolerant\footnote{A point process $\Pi$ on $\R^{d}$ is said to be insertion tolerant  if for any set subset  $A\subseteq\R^{d}$ with positive, finite Lebesgue measure, if a point $U$ is sampled uniformly from $A$ and added to $\Pi$, the distribution of the resulting point process $\Pi+\del_{U}$, is absolutely continuous to the distribution of $\Pi$. Poisson process is an obvious example. Evidently, an insertion tolerant point process does not have rigidity of numbers.}. Consequently it does not display rigidity of numbers.

However, this is an example where the ambient space (the hyperbolic plane) exhibits non-Euclidean geometry (in particular, it displays hyperbolic or Lobachevskian geometry \cite{Terras,And,Lob}). This still leaves open the question of whether a translation-invariant determinantal point process on a $\R^{d}$ given by a projection kernel  necessarily exhibits rigidity of numbers, or more generally, what can be said about the rigidity  properties of such point processes. We do not have a complete answer to this question, but in this work we are able to prove rigidity of numbers (or the lack of it) for a significant class of determinantal point processes in the plane. This is taken up for consideration in the next section.

\subsection{Radial determinantal processes on the plane} \label{sec:radial}
 Let $\gamma$ be a radially symmetric probability measure on the complex plane such that $c_{j}^{-1}:=\int_0^\infty |z|^{2j} d\gamma(z)<\infty$ for all $j$. Then $\{\sqrt{c_{j}}z^{j} \! : \! j\ge 0\}$ is an orthonormal set in $L^{2}(\gamma)$. It is an orthonormal basis for the subspace of holomorphic functions in $L^{2}(\gamma)$ and the projection operator onto this closed subspace is given by the kernel $K(z,w)=\sum_{j=0}^{\infty} c_j z^j\ol{w}^j$.  By the result of Machchi and Soshnikov quoted earlier, there is a unique  determinantal process with kernel $K$ and measure $\gamma$. These processes include the infinite Ginibre ensemble (when $\gamma$ is standard complex Gaussian measure on the plane) and the Bergman determinantal process (when $\gamma$ is normalized uniform measure on the unit disk).

 Before stating the main result in this section, we introduce the following quantities.
 \begin{align*}
 \mu_j:=\frac{c_j}{c_{j+1}}, &\qquad \sigma_j:=\frac{\mu_{j+1}}{\mu_j}-1, \qquad
 \nu_j := \; \frac{\mu_{j+3}\mu_{j+2}\mu_{j+1}}{\mu_j^3} - 4 \frac{\mu_{j+2} \mu_{j+1}}{\mu_j^2} + 6 \frac{\mu_{j+1}}{\mu_j} -3.
 \end{align*}
 In the setting described so far, we prove the following theorems.
 \begin{theorem}
 \label{compact} Let $\Pi$ be the determinantal point process  on $\C$ with kernel $K$ with respect to the radial background measure $\gamma$. If $\sum_{j=1}^{\infty}\sigma_j < \infty$, then the process $\Pi$ does not exhibit rigidity of numbers.
\end{theorem}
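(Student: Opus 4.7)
The strategy is to apply Kostlan's theorem for radial projection determinantal processes to translate the rigidity question into a recoverability question for an explicit family of independent random variables, and then to use the summability $\sum\sigma_j<\infty$ to rule out recoverability.

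\emph{Setup and reductions.} Both $\mu$ and the kernel $K(z,w)=\sum_j c_j z^j\bar w^j$ are rotation invariant, so $\Pi$ is rotation invariant and it suffices to contradict rigidity of $N_{D_r}$ for some disk $D_r=\{|z|<r\}$ centred at the origin. By Kostlan's theorem, there is a coupling under which the multiset of squared moduli of $\Pi$ equals $\{R_j\}_{j\ge0}$, an independent family with $R_j$ having density proportional to $c_j t^j$ against the push-forward of $\mu$ under $z\mapsto|z|^2$. Direct moment computations give $\mathbb{E}[R_j]=c_j/c_{j+1}=\mu_j$ and $\mathbb{E}[R_j^2]=c_j/c_{j+2}=\mu_j\mu_{j+1}$, so $\mathrm{Var}(R_j)=\mu_j^2\sigma_j$. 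Since $\log(1+\sigma_j)\sim\sigma_j$ is summable, $\mu_j$ converges to a finite limit $\mu_\infty>0$; pick $r$ with $r^2<\mu_\infty$. A Chebyshev estimate then gives $p_j:=\mathbb{P}(R_j<r^2)\le \mu_j^2\sigma_j/(\mu_j-r^2)^2\le C\sigma_j$ for all large $j$, hence $\sum_j p_j<\infty$ and $N_{D_r}=\sum_j\mathbf{1}_{R_j<r^2}$ is almost surely finite.

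\emph{Rotation invariance washes out the labels.} By the rotation invariance of $\Pi$ and of $D_r$, if $N_{D_r}$ were $\sigma(\Pi_{D_r^c})$-measurable, a standard symmetrisation argument shows it would in fact be a measurable function of the unordered multiset $\mathcal{M}_{\mathrm{out}}:=\{R_j:R_j\ge r^2\}$ alone. What remains to prove is therefore that $\sum_j\mathbf{1}_{R_j<r^2}$ is \emph{not} measurable with respect to $\sigma(\mathcal{M}_{\mathrm{out}})$.

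\emph{Main obstacle.} This last step is the crux, because one must argue that from the unordered multiset $\mathcal{M}_{\mathrm{out}}$ the underlying index labels of the $R_j$'s cannot be reconstructed accurately enough to pin down their count. The summability $\sum\sigma_j<\infty$ forces $\mathrm{Var}(R_j/\mu_j)=\sigma_j\to 0$ together with $\mu_j\to\mu_\infty$, so the laws of $R_j$ for large $j$ all concentrate near $\mu_\infty$ and become asymptotically indistinguishable; heuristically, one cannot tell which labels are ``missing'' from $\mathcal{M}_{\mathrm{out}}$ and hence cannot determine their count. To make this precise one would write down the posterior distribution of the label assignment given $\mathcal{M}_{\mathrm{out}}$ and use $\sum\sigma_j<\infty$ as a quantitative overlap bound to show that this posterior is not concentrated on a single value of the count on a positive-probability event. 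A cleaner alternative is to establish insertion tolerance of $\Pi$ on $D_r$ directly via a Palm-measure computation for determinantal processes, paralleling the treatment of the Bergman process by Holroyd and Soo, since insertion tolerance immediately rules out rigidity of numbers.
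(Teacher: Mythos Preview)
Your proposal has two genuine gaps, and the ``cleaner alternative'' you mention at the end is in fact the paper's actual proof.

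\textbf{The reduction step is not justified.} You assert that rotation invariance forces any $\sigma(\Pi_{D_r^c})$-measurable representative of $N_{D_r}$ to factor through the multiset $\mathcal M_{\mathrm{out}}=\{R_j:R_j\ge r^2\}$ of outside moduli. Rotation invariance only gives you a rotation-invariant functional of the outside configuration, and rotation-invariant functionals of point configurations are \emph{not} functions of moduli alone (pairwise distances, for instance, are rotation invariant). What you would actually need is that $\sigma(\Pi_{D_r^c})\cap\sigma(|\Pi|)=\sigma(|\Pi_{D_r^c}|)$ up to null sets, and for a determinantal process with angular correlations encoded in the Vandermonde there is no reason this should hold. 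The ``standard symmetrisation argument'' you invoke does not exist in this form.

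\textbf{The main step is left undone.} Even granting the reduction, you explicitly stop at ``one would write down the posterior\ldots and use $\sum\sigma_j<\infty$ as a quantitative overlap bound,'' without carrying this out. The heuristic that the laws of $R_j$ cluster near $\mu_\infty$ is correct, but turning it into a proof that the count is not $\sigma(\mathcal M_{\mathrm{out}})$-measurable is not routine.

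\textbf{What the paper does.} The paper takes exactly your ``cleaner alternative'': it compares $\Pi$ with its Palm measure $\widetilde\Pi$ at the origin. For the truncated process $\Pi_n$, the Radon--Nikodym derivative of $\widetilde\Pi_n$ against $\Pi_{n-1}$ is $\theta_n=c_n\prod_{i=1}^n|z_i|^2$, and Kostlan's theorem rewrites this as $\prod_{j=1}^n(\Gamma_j/\mu_j)$ with independent $\Gamma_j$. Since $\mathrm{Var}(\Gamma_j/\mu_j)=\sigma_j$ and $\sum\sigma_j<\infty$, the product converges a.s.\ to a random variable that is positive with positive probability; hence $\mathbb E[\theta_n\wedge1]$ stays bounded away from $0$, so $\widetilde\Pi_n$ and $\Pi_{n-1}$ can be coupled to coincide with probability bounded below uniformly in $n$. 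Passing to the limit gives a coupling of $\Pi$ and $\widetilde\Pi$ that agrees with positive probability, which rules out rigidity of numbers. Note that Kostlan and the identity $\mathrm{Var}(\Gamma_j/\mu_j)=\sigma_j$ are the same ingredients you identified; the difference is that the Palm route packages them into a one-line martingale-convergence argument instead of an unresolved label-reconstruction problem.
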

 \begin{theorem}
 \label{dpprt}
 Let $\Pi$ be the determinantal point process on $\C$ with kernel $K$ with respect to the radial background measure $\gamma$. Assume that \begin{inparaenum}[(1)] \item $\mu_j \to \infty$ as $j \to \infty$, \item $\sum_{j=1}^{\infty}\nu_j<\infty$ and \item$\sum_{j=1}^{k} \mu_j^4 \nu_j= o(\mu_k^4)$ as $k \to \infty$.  \end{inparaenum}  Then $\Pi$ exhibits rigidity of numbers.
\\
\end{theorem}
The condition $\sum_{j}\sigma_{j}<\infty$ in Theorem~\ref{compact} is satisfied if $\mu_j=A+Bj^{-1}+ O(j^{-2})$. The conditions in Theorem~\ref{dpprt} are satisfied if $\mu_j=Cj^a(1+A j^{-1} + O(j^{-2}))$. The latter is certainly the case if $\mu$ has density proportional to $|z|^{\alpha}e^{-|z|^{\beta}}$. This includes the infinite Ginibre ensemble with $\alpha=0$ and $\beta=2$.

We believe that the additional moment assumption $\sum_{j=1}^{\infty}\mu_j^{-a}<\infty$ for some integer $a \ge 1$ would suffice to establish strong tolerance for the process $\Pi$ in Theorem \ref{dpprt} for any bounded domain $D$ subject to its particle number. However, the rigorous demonstration of such a result would take us too far afield with regard to techniques and methods, and it would be appropriate to leave such considerations for a different occasion.




While Theorems \ref{compact} and \ref{dpprt} cover a good class of radially symmetric determinantal processes, it would be definitely very interesting to sharpen the dichotomy present in these two theorems. In particular, it be of great interest to extend this dichotomy and establish that it is the compactness or the non-compactness of the support of the measure $\mu$ that determines the rigidity behaviour of the determinantal process (e.g. as in Theorems \ref{compact} and \ref{dpprt}).

\subsection{Perturbed lattice models} \label{sec:perturbed_lattice}
 A perturbed lattice  is the point process $\sum_{k\in \Z^d}\delta_{k+\xi_{k}}$, whose points are $\{k+\xi_{k}\! : \! k\in \Z^{d}\}$ where $\xi_{k}$ are independent $\R^{d}$-valued random vectors. Thus, heuristically speaking, by a perturbed lattice we mean a point process obtained by perturbing the points of a lattice by independent random variables. 

In one dimension, if the perturbations are i.i.d., non-degenerate and have finite variance, then it can be shown that the resulting point process exhibits rigidity of numbers, and is strongly tolerant given the particle number for any bounded domain.  For i.i.d. lattice perturbations of $\Z^2$, the situation is similar these results are due to Holroyd and Soo \cite{HS}. Remarkably, for i.i.d.  perturbations by isotropic Gaussians in dimensions $d \ge 3$, it is known (\cite{PS}) that there is a phase transition in the variance of the perturbing Gaussian. Then, there is a critical $\sigma_c$ such that if the variance of the perturbing Gaussians is more than $\sigma_c^2$, then the point process is deletion tolerant\footnote{A point process $\Pi$ is said to be deletion tolerant if for any randomly chosen point $Z$ of $\Pi$, the distribution  of the point process $\Pi-\del_{Z}$ got by deleting the point at $Z$, is absolutely continuous to the distribution of $\Pi$.}, while for  perturbations with variance less than $\sigma_c^2$ it exhibits rigidity of numbers.

In this work we investigate the setting of dimension $1$ and Gaussian perturbations, but where the perturbations are no longer identically distributed.  The following theorem entails that if we posit a power law growth for the variances of the perturbations, the model undergoes a phase transition in the rigidity behaviour as follows.

\begin{theorem}
 \label{power}
Consider the point process $\X=\sum_{k\in \Z}\delta_{k+Z_k}$ on $\R$, where $Z_k\sim N(0,\sigma_\beta(k))$ are independent and $\sigma_\beta(k) = |k|^\beta$. Then,
  \begin{enumerate}
  \item For $\beta>1/2$, $\X$ does not exhibit rigidity of numbers. 
  \item For $\beta \le 1/2$, $\X$ exhibits rigidity of the number of points in any bounded domain  and strong tolerance given this  number.
  \end{enumerate}
\end{theorem}

 \section{Rigidity of $\a$-GAFs: Proof of Theorem~\ref{hierig}}
 Let $\fa$ be the $\a$-Gaussian analytic function (abbreviated as $\a$-GAF) defined by \[ \fa(z)= \sum_{j=0}^{\infty} \frac{\xi_j}{\jh} z^j. \] In this section, $\ze_{\a}$ will denote the point process of zeroes of $\fa$. Our goal  is to establish that  $\ze_\a$ exhibits rigidity of the first $k_{\a}$ moments in any bounded domain where $\frac{1}{k_{\a}}<\a \le \frac{1}{k_{\a}-1}$. We formulate this assertion in the form of the following Lemma.
 
 \begin{lemma}
\label{rigidity}
 Let $D\subset \C$ be a bounded open set, and let $\mathcal{S}_{D^c}$ denote the space of locally finite point configurations on $D^c$. For each $0 \le k \le \lfloor 1/\a \rfloor$,  there is a map $S_k : \mathcal{S}_{D^c} \to \C$ such that the moment $\sum_{z \in \ze_\a \cap D} z^k = S_k(\ze_\a \cap D^c)$ a.s.
 \end{lemma}

Observe that Lemma \ref{rigidity} will imply Theorem \ref{hierig}, so henceforth we focus on proving Lemma \ref{rigidity}.


As in nearly all proofs of rigidity, we would be interested in proving upper bounds on the variance of appropriate linear statistics of the point process of zeroes. We formulate this in the form of a lemma on the variance growth of linear statistics.

\begin{lemma} \label{lem:varlinstat}
Let $\ph$ be a $C_c^{\infty}$ function on $\C$ such that \begin{inparaenum}[(a)] \item $|\Delta \ph|$ is radial and \item $\Delta \ph$ vanishes in a neighbourhood of the origin. \end{inparaenum}
Let $L>0$, and define $\ph_L(z)=\ph(z/L)$. Then we have 
\[\var\l(\int \ph_L \ d\ze_\a\r) \le C_{\ph,\a} \int_0^{\infty} \!\!\! \int_0^{\infty} |\L \ph(r) \L \ph(s) | \exp(-\a L^{\frac{2}{\a}}(r^{\frac{1}{\a}}-s^{\frac{1}{\a}})^2)L^{-\frac{1}{\a}}(rs)^{1-\frac{1}{2\a}}dr ds. \] 
\end{lemma}

\begin{proof}[Proof of Lemma \ref{lem:varlinstat}]
 The random analytic function $\fa$ is a complex Gaussian field on $\C$ whose covariance kernel of which is given by
 \begin{equation} \label{eq:series_exp}
 K_{\a}(z,w):=\E[\fa(z)\overline{\fa(w)}]=\sum_{j=0}^{\infty} \frac{(z\ol{w})^j}{\ja}.
 \end{equation}
 The normalized random function $\hat{\fa}(z):=\fa(z)/\sqrt{K_{\a}(z,z)}$ has covariance
 \[\t_{\a}(z,w):=\E[\hat{\fa}(z)\overline{\fa(w)}]=\frac{K_{\a}(z,w)}{\sqrt{K_{\a}(z,z)K_{\a}(w,w)}}.\]
The variance of linear statistics will be expressed in terms of  the quantity
 \begin{align*}
 \psi_{\a}(z,w)&:=\E[\log|\hat{\fa}(z)|\cdot \log|\hat{\fa}(w)|] = \sum_{j=1}^{\infty} \frac{|\t_{\a}(z,w)|^{2j}}{4 j^2}.
 \end{align*}
It may be shown (c.f. Lemma \ref{lem:variance} at the end of this section)  that
\[
\var\l(\int \ph_L \ d\ze_\a\r) = \frac{1}{4 \pi^2} \int\!\!\! \int \L \ph_L(z) \L \ph_L(w) \psi_{\a}(z,w) dm(z)dm(w);
\]
Here and everywhere in the paper, we use $dm(z)$ to denote Lebesgue measure.

Since $|\t_{\a}(z,w)| \le 1$, we have the  comparison $|\t_{\a}(z,w)|^2 \le \psi_{\a}(z,w) \le 2 |\t_{\a}(z,w)|^2$ (because $\sum j^{-2}\le 2$). This implies that
\begin{align}
\var\l(\int \ph_L \ d\ze_\a \r) &\le \frac{2}{4 \pi^2}  \int\!\!\! \int |\L \ph_L(z) \L \ph_L(w)|\cdot |\t_{\a}(z,w)|^2 dm(z)dm(w) \nonumber \\
&= \frac{2}{4 \pi^2}  \int \!\!\! \int |\L \ph(z) \L \ph(w)|\cdot |\t_{\a}(Lz,Lw)|^2 dm(z)dm(w). \label{eq:bdforlinearstatvar}
 \end{align}
In the last step we made a simple change of variables.

Recall that we are considering  functions $\ph$ for which \begin{inparaenum}[(a)] \item $|\Delta \ph|$ is radial and \item $\Delta \ph$ vanishes in a neighbourhood of the origin. \end{inparaenum}    Further, recall the definition of $K_\a(z,w)$, and notice that $K_\a(z,z)$ and $\theta_\a(w,w)$ are also radial functions. Now consider integrating over the angular components of $z$ and $w$ (expressed in polar co-ordinates) on the right hand side of \eqref{eq:bdforlinearstatvar}. In other words, we write $z=re^{i \mu}$ and $w= s e^{i \nu}$, and we first perform the integration on the right hand side of \eqref{eq:bdforlinearstatvar} with respect to the variables $\mu$ and $\nu$. Due to the radial nature of the functions $\ph$ and $K_\a(z,z), K_\a(w,w)$ above, we may deduce that 
\begin{align*}
& \var\l(\int \ph_L \ d\ze_\a \r)  \\ \le & \frac{2}{4 \pi^2} 
  \int \!\!\! \int \frac{|\L \ph(r) \L \ph(s)|} {|K_{\a}(Lre^{i\mu},Lre^{i\mu})|^2 |K_{\a}(Lse^{i\nu},Lse^{i\nu})|^2} \cdot \l( \int \int |K_\a(Lre^{i\mu},Lse^{i\nu})|^2 d\mu d\nu   \r)  rdr sds. \numberthis \label{eq:angular}
\end{align*}  
We perform the integration over the angular variables $\mu,\nu$ in \eqref{eq:angular} by expanding $K_\a(Lre^{i\mu},Lse^{i\nu})$ in its series expansion \eqref{eq:series_exp}.
 
  Set \[g(t,\beta)= \sum_{k=0}^{\infty} \frac{t^{k}}{(k!)^{\beta}},\]
  and denote $a_k= \frac{t^{k}}{(k!)^{\beta}}$.  
Thus, from \eqref{eq:angular}, via integration over angular variables as above, we may arrive at
  \begin{align*}
 \var\l(\int \ph_L \ d\ze_\a \r) &\le 2  \int_0^{\infty} \!\!\! \int_0^{\infty} |\L \ph(r) \L \ph(s) | \frac{g(L^4r^2s^2,2\a)}{g(L^2r^2,\a)g(L^2s^2,\a)} rs dr ds.
 \end{align*}
 
 The ratio of two successive terms in the expansion of $g(t,\beta)$ is \[\frac{a_{k+1}}{a_k}=\frac{t}{(k+1)^{\beta}}.\] This implies that $a_k$ is maximised when $t=k^{\beta}$, in other words the maximiser is $k_* = t^{1/\beta} $ (strictly speaking $\lfloor t^{1/\beta}\rfloor$, but for ease of notation we ignore the symbol for integer part). The maximal term, via Stirling's approximation for the factorial, is \[ a_{k_*}= \exp\l(t^{1/\beta}\log t - \beta \frac{(t^{1/\beta} + \frac{1}{2}) \log t}{\beta} + \beta t^{1/\beta} \r) = t^{-\frac12}e^{\beta t^{1/\beta}} .\]

 For a positive integer $j$, we have \[a_{k_*+j}=\frac{t^{k_*+j}}{[(k_*+j)!]^{\beta}}= a_{k_*} \frac{t^j}{k_* ^{j \beta}} \prod_{i=1}^j\l( 1+ \frac{i}{k_*} \r)^{-\beta} \le a_{k_*} \prod_{i=1}^j\l( 1+ \frac{i}{k_*} \r)^{-\beta}.  \]
 Each term $(1+\frac{i}{k_{*}})^{-\beta}$ in the product is less than $1$. Further, for $i>\frac12 k_{*}$ this term is at most $(2/3)^{\beta}$. For  $1\le i\le \frac12 k_{*}$ by virtue of the inequality $e^{x/2}\le 1+x\le e^{x}$ for $0<x<\frac12$, the same term is between $e^{-\beta i/k_{*}}$ and $e^{-\beta i/2k_{*}}$. Putting all this together, we have
 \begin{align*}
  \frac{a_{k_{*}+j}}{a_{k_{*}}} \le  \l(\frac{2}{3}\r)^{j-\frac12 k_{*}} \mb{ if }j>\frac12 k_{*}, \\
e^{-\beta j^{2}/2k_{*}}  \le \frac{a_{k_{*}+j}}{a_{k_{*}}} \le e^{-\beta j^{2}/4k_{*}} \mb{ if }j\le \frac12 k_{*}.
 \end{align*}
Similarly,  $a_{k_{*}-j}=a_{k_{*}}\prod_{i=0}^{j-1}(1-\frac{i}{k_{*}})^{\beta}$ from which by analogous steps (use $e^{-2x}\le 1-x\le e^{-x}$ for $0<x<\frac12$) we get
\begin{align*}
  \frac{a_{k_{*}-j}}{a_{k_{*}}} \le  \l(\frac{1}{2}\r)^{j-\frac12 k_{*}} \mb{ if }j>\frac12 k_{*}, \\
e^{-\beta j^{2}/k_{*}}  \le \frac{a_{k_{*}+j}}{a_{k_{*}}} \le e^{-\beta j^{2}/2k_{*}} \mb{ if }j\le \frac12 k_{*}.
 \end{align*}
 Thus, for $j \ge \sqrt{k_*/c\beta}$, the quantity $a_{k_* \pm j}/a_{k_*}$ decays exponentially. The upshot of this is that $g(t,\beta)/a_{k_*}\sqrt{k_*}$ is bounded between positive quantities that depend only on $\beta$. Therefore, the ratio between the quantitites
 \begin{align*}
 \frac{g(L^4r^2s^2,2\a)}{g(L^2r^2,\a)g(L^2s^2,\a)}\end{align*}
  and
   \begin{align*}&&\frac{\l(\frac{e^{2\a(L^4r^2s^2)^{1/2\a}}}{\sqrt{L^4r^2s^2}}(L^4r^2s^2)^{1/4\a}\r)}{\l(\frac{e^{\a(L^2r^2)^{1/\a}}}{\sqrt{L^2r^2}} \cdot (L^2r^2)^{1/2\a}\frac{e^{\a(L^2s^2)^{1/\a}}}{\sqrt{L^2s^2}} \cdot (L^2s^2)^{1/2\a}\r)}  =\frac{\exp(-\a L^{2/\a}(r^{1/\a}-s^{1/\a})^2)}{L^{1/\a}r^{1/2\a}s^{1/2\a}} \end{align*}
  is bounded between two constants $c_{\beta}$ and $C_{\beta}$.

 As a result,
 \begin{align}
 &\int_0^{\infty}\!\!\! \int_0^{\infty} |\L \ph(r) \L \ph(s) | \frac{g(L^4r^2s^2,2\a)}{g(L^2r^2,\a)g(L^2s^2,\a)} rs dr ds \label{eq:varianceboundalphagaf} \\
 &  \le C_{\beta} \int_0^{\infty} \!\!\! \int_0^{\infty} |\L \ph(r) \L \ph(s) | \exp(-\a L^{2/\a}(r^{1/\a}-s^{1/\a})^2)L^{-1/\a}r^{1-1/2\a}s^{1-1/2\a} dr ds,
\nonumber
 \end{align}
The last integral is well defined since $\L \ph (r) \equiv 0$ for $r$ close to $0$. 

This completes the proof of Lemma \ref{lem:varlinstat}.
\end{proof}


We now move on to complete the proof Lemma \ref{rigidity} (and hence  the proof of Theorem \ref{hierig}).

\begin{proof}[Proof of Lemma \ref{rigidity}]
We apply the bound to  a specific family of radial functions. Given any $r_{0}>1$ and any $\eps>0$, let $\ph_{\eps}$ be a radial function such that
\begin{enumerate}
\item $\ph_{\eps}(r)=1$ for $r<2r_{0}$ and $\ph_{\eps}(r)=0$ for $r>2r_{0}e^{2/\eps}$,
\item $|\ph_\eps'(r)|\le \frac{\eps}{2r}$ and $|\ph_\eps''(r)|\le\frac{\eps}{2r^2}$ for all $r$.
\end{enumerate}

Such a function can be constructed by starting with the harmonic function $(\log|z|-\log a)/(\log b-\log a)$ for some $2r_{0}<a<b<2r_{0}e$ and  convolving it with  the indicator function of a disk $D(0,\delta)$ for a small enough $\del>0$.

Consider the function $\Phi^\eps(z)=z^k\ph_\eps(z)$. Observe that $\Phi^\eps$ satisfies the conditions (a) and (b) above. As a result,  $\Phi^\eps$ satisfies the inequality \eqref{eq:varianceboundalphagaf}. Hence we have
\begin{equation} \label{eq:varianceboundalphagaf_intermediate}
\mathrm{Var}\l( \int L^k \Phi^\eps_L d \F_\a \r) \le L^{2k - \frac{2}{\a}} \int |\Delta \Phi^\eps (r)|^2 r^{3-\frac{2}{\a}} dr + K(\Phi^\eps)L^{2k}e^{-cL^{2/\a}},
\end{equation}
where $K(\Phi^\eps)$ is a positive quantity that depends only on $\Phi^\eps$ (and is independent of $L$).
We briefly mention here as to how we obtain the above inequality. To this end, on the right hand side of \eqref{eq:varianceboundalphagaf} we carry out integration with respect to the variable $s$. This integration can be decomposed into two regimes of the variable $s$ : namely, $\{s : r^{1/\a}-s^{1/\a} \le 1  \}$ and $\{s : r^{1/\a}-s^{1/\a} > 1  \}$. The two terms on the right hand side of \eqref{eq:varianceboundalphagaf_intermediate} are obtained from these two regimes of integration respectively.

Observe that $ 2k - 2/\a \le 0 $, so $ L^{2k - \frac{2}{\a}} \le 1$.

We now claim that $ \int |\Delta \Phi^\eps (r)|^2 r^{3-\frac{2}{\a}} dr = O(\eps)$. To see this, applying Leibniz rule to compute the Laplacian $|\Delta \Phi^\eps (r)|$ is supported on the set $2r_0 \le r \le 2r_0\exp(2/\eps)$ , and is $O(\eps r^{k-2})$ on this interval. This implies that
\[\int |\Delta \Phi^\eps (r)|^2 r^{3-\frac{2}{\a}} dr \le C_1 \eps^2 \int_{2r_0}^{2r_0\exp(2/\eps)} r^{2k-4+3-\frac{2}{\a}} dr \le C_2 \eps^2 \int_{2r_0}^{2r_0\exp(2/\eps)} \frac{dr}{r} \le C_3 \eps. \]

Now, given $\delta>0$, first choose $\eps$ such that  $\int |\Delta \Phi^\eps (r)|^2 r^{3-\frac{2}{\a}} dr \le \delta/2$; based on that choose $L$ large enough such that $K(\Phi^\eps)L^{2k}e^{-cL^{2/\a}} \le \delta/2$. 
This enables us to invoke Theorem 6.1 from \cite{GP} (using the function $L^k\Phi_L^\eps(z)$ as defined above; for a statement see Theorem \ref{thm:GP} in Section \ref{sec:thiswillberewritten} ) and conclude that there is a measurable function $S_k:\mathcal{S}_{D^c} \to \C$ such that $\sum_{z \in \ze_\a \cap D} z^k = S_k(\ze_\a \cap D^c)$ a.s. This completes the proof of  Lemma \ref{rigidity} (and hence the proof of Theorem \ref{hierig}).
\end{proof}

We end this section with a technical lemma that provides us with an expression for $\var\l(\int \ph_L \ d\ze_\a\r)$.

\begin{lemma} \label{lem:variance}
Let $\ph$ be a $C_c^2$ function and $L>0$. Then we have
\[
\var\l(\int \ph_L \ d\ze_\a\r) = \frac{1}{4 \pi^2}  \int\!\!\! \int \L \ph_L(z) \L \ph_L(w) \psi_{\a}(z,w) dm(z)dm(w).
\]
\end{lemma}

\begin{proof}
We consider the centered random variable 
\[ \left( \int \ph_L \ d\ze_\a \right) - \E \left[\int \ph_L \ d\ze_\a \right].  \]
We may write, using the Edelman-Kostlan formula (c.f. Eq. (2.4.8) in \cite{HKPV} Chap 2; see also Eq. (3.5.2) in \cite{HKPV} Chap. 3), that 
\begin{equation} 
\left( \int \ph_L \ d\ze_\a \right) - \E \left[\int \ph_L \ d\ze_\a \right] = \frac{1}{2\pi}\int \L \phi_L(z) \log |\hat{\fa}(z)| dm(z).
\end{equation}
Thus, 
\begin{align*} 
& \var\l(\int \ph_L \ d\ze_\a\r)  \\ 
=  & \var\l(\frac{1}{2\pi}\int \L \phi_L(z) \log |\hat{\fa}(z)| dm(z)\r) \\
= & \frac{1}{4\pi^2} \int \int \L \phi_L(z) \L \ph_L(w) \mathrm{Cov}\l( \log |\hat{\fa}(z)|, \log |\hat{\fa}(w)| \r) dm(z). \numberthis \label{eq:intermediate}
\end{align*}
We now use the following result ( Lemma~3.5.2  in \cite{HKPV}) : if $a,b$ are complex Gaussians with $\E[|a|^2]=\E[|b|^2]=1$ and $\E[a \overline{b}]=\theta$, then 
  \[    \mathrm{Cov}\l( \log |a|, \log |b| \r)   =   \sum_{m=1}^\infty \frac{|\theta|^{2m}}{4m^2}.  \] Applying this to \eqref{eq:intermediate}, we obtain the desired result.
\end{proof}

\section{Proof of Theorem~\ref{detrig}: Non-projection determinantal processes are tolerant}
We first sketch the basic idea, and then move on to the proof. Consider a  kernel of the form $K(x,y)=\sum_{j\ge 1}\la_{j}\ph_{j}(x)\bar{\ph}_{j}(y)$ where $0\le \la_{j}\le 1$ and $\ph_{j}$ are orthonormal in $L^{2}(\mu)$. Let $I_{j}$ be independent Bernoulli random variables with $\P\{I_{j}=1\}=\la_{j}$, and define $K_{I}(x,y)=\sum_{j\ge 1}I_{j}\ph_{j}(x)\bar{\ph}_{j}(y)$. If we sample $I_{j}$s first and then a determinantal point process with kernel $K_{I}$, the resulting process is precisely the determinantal point process with kernel $K$ (see Thereom~4.5.3 in \cite{HKPV}). In particular, if $0<\la_{j}<1$ for at least one $j$, then the  number of points in the point process is a non-constant random variable,  or it is almost surely infinite. For the special case when the expected number of points $\sum_j\lambda_j<\infty$, this shows that rigidity fails even when $D=\Xi$ (also see   Lemma~\ref{lem:inclusion}).


In general, $K$ (rather, its associated operator $\mathcal K$) may have a continuous spectrum and hence we cannot write an expansion as above. However, we can write the kernel $K$ as a convex combination of two projection kernels and extend the above idea to show that the determinantal process with kernel $K$ is a mixture of two determinantal processes. From this, we can deduce that the given determinantal process does not have rigidity of numbers. The essential observation is that the proof of Theorem~4.5.3 in \cite{HKPV} referred to above, does not require $\ph_{j}$s to be orthogonal.


%

\begin{proof}[Proof of Theorem \ref{detrig}]
Let $K$ be a locally trace-class, Hermitian, contraction kernel on $L^{2}(\Xi,\mu)$ which is not a projection. Then, the corresponding integral operator  $\mathcal K=\int_{0}^{1}\la dE(\la)$ where the projection-valued measure $E$ has the property that $E[\del,1-\del]\not=0$ for some $\del>0$. 

Let $f$ be in the range of $E[\del,1-\del]$ with $\|f\|^{2}=\del$ (norms in $L^{2}(\mu)$). Let 
\begin{align*}
L_{0}(x,y)=K(x,y)-\half f(x)\bar{f}(y)\;\;\;\mb{ and } \;\;\; L_{1}(x,y)=K(x,y)+ \half f(x)\bar{f}(y).
\end{align*} 
Let $\mathcal L_{0}$ and $\mathcal L_{1}$ be the associated operators.

Since $\mathcal K$ is locally of trace-class and Hermitian and $f\in L^{2}(\mu)$, it follows that $\mathcal L_{0}$ and $\mathcal L_{1}$ are also locally of trace-class and Hermitian. Further, by the choice of $f$, it follows that they are contraction kernels on $L^{2}(E,\mu)$, i.e., $0\le \mathcal L_{0}\le I$ and $0\le \mathcal L_{1}\le I$. To see this, let $P_{\del}$ denote the spectral projection $E[\del,1-\del]$ and let $f\otimes f^{*}$ denote the operator on $L^{2}(\mu)$ that maps $\psi$ to $\langle \psi,f \rangle f$. Then $f\otimes f^{*}\le \del P_{\del}$ (in the positive definite order) and $\del P_{\del}\le \mathcal K\le (1-\del)P_{\del}$. Therefore, 
\begin{align*}
\mathcal L_{0}=(K-\frac12 f\otimes f^{*})\ge\frac12 \del P_{\del}, \\
 \mathcal L_{1}=(K+\frac12 f\otimes f^{*})\le (1-\frac12 \del)P_{\del}.
\end{align*}
This shows that $\mathcal L_{0}$ and $\mathcal L_{1}$ are positive contractions and hence, by the theorem of Macchi and Soshnikov (see the discussion preceding the statement of Theorem~\ref{detrig}), there exist determinantal processes $\Pi_{0}$ and $\Pi_{1}$ with kernels $L_{0}$ and $L_{1}$, respectively.

By a result tracing back to Lyons, we may construct $\Pi_{0}$ and $\Pi_{1}$ on the same probability space such that every point of  $\Pi_{0}$ is also a point of $\Pi_{1}$, along with a  Bernoulli$(1/2)$ random variable $\xi$ that is independent of $\Pi_{0}$ and $\Pi_{1}$.  For Lyons' original result (applicable to the case when $\Xi$ is a countable set),  see Theorem~6.2 of \cite{Ly}. For  more recent versions that address the general case, we refer the reader to  \cite{Tripathi, Lyons-ICM}. The  result in the general setting can, in fact,  be obtained from Lyon's original result for the countable case via the so-called \textit{Goldman's transference principle} \cite{Go}.

 As $\mathcal L_{1}-\mathcal L_{0}=f\otimes f^{*}$ has rank one, in view of the above coupling there is at most one point in $\Pi_{1}$ that is not in $\Pi_{0}$. Define
\[
\Pi=\begin{cases}
\Pi_{0} &\mb{ if }\xi=0, \\ \Pi_{1} &\mb{ if }\xi=1.
\end{cases}
\]
We claim that $\Pi$ is a determinantal point process with kernel $K$. To show this, fix $k\ge 1$ and $x_{1},\ldots ,x_{k}\in E$ and let  $M=\l(L_{0}(x_{i},x_{j})\r)_{i,j\le k}$ and $\uu^{t}=(f(x_{1}),\ldots ,f(x_{k}))$. Then, $\l(L_{1}(x_{i},x_{j})\r)_{i,j\le k}=M+\uu \uu^{*}$. Hence, $\Pi$ has $k$-point intensity given by
\begin{align*}
p_{k}(x_{1},\ldots ,x_{k}) &= \half \det\l( L_{1}(x_{i},x_{j})\r)_{i,j\le k} + \half \det\l( L_{0}(x_{i},x_{j})\r)_{i,j\le k} \\
&= \frac12 \det(M+\uu \uu^{*})+\frac12 \det(M) \\
&= \half \det(M)\l(1+ \uu^{*} M^{-1} \uu \r) + \half \det(M)\\
&=  \det(M)\l(1+\half \uu^{*} M^{-1} \uu \r) \\
&= \det\l(M+\half \uu \uu^{*}\r)
\end{align*}
which is precisely  $\det(K(x_{i},x_{j}))_{i,j\le k}$. This proves the claim.

Now, there exists a pre-compact set $D$ such that $\P\{\Pi_{1}\setminus \Pi_{0} \subseteq D\}>0$. On this event, $\Pi_{0}, \Pi_{1},\Pi$, all agree on $D^{c}$ but inside $D$, $\Pi_{1}$ has one point more than $\Pi_{0}$. Condition on the configuration $\Pi_{D^{c}}$. Depending on the value of $\xi$, the number of points of $\Pi$ inside $D$ can take two different values, each with positive probability (for a positive probability of configurations outside $D$).  This proves that $\Pi$ cannot have rigidity of numbers.
\end{proof}

\section{Determinantal processes with radial intensity measures}
\label{radialmeasures}

In this section, we will prove Theorems \ref{compact} and \ref{dpprt}. We will take up these proofs in two separate subsections. We recall the relevant notations from Section \ref{introduction}, and introduce some additional notation by setting $\Pi_n$ to be the d.p.p. generated by the $(K_n,\gamma)$, where $K_n$ is the truncated kernel given by \[K_n(z,w)=\sum_{j=0}^{n} c_j z^j\ol{w}^j.\] In our further considerations, we will use the concept of Palm measures. Roughly speaking, the Palm measure of a point process at the origin is the law of point process conditioned to contain the origin. The precise definition is given below. For a more detailed treatment, we refer the reader to chapter~10 of \cite{Ka} or chapter~3  of \cite{bbkbook}. 

\begin{definition} \label{def:Palm}
Let $\Pi$ be a point process on the space $X$,  let $\mathcal{S}$ denote the Polish space of all locally finite point configurations on $X$,  $\mathcal{B}(X)$ denote the Borel sigma field on $\mathcal{S}$ and $\rho_1$ denote the one-point intensity measure of $\Pi$. Let $F : X \times \mathcal{S} \mapsto \R$ be a test function. Then, for $s \in X$, the Palm measure  $\Pi_s$ at $s$ is defined by  
$\E_\Pi \l[ \sum_{s \in \Pi} F(s,\Pi) \r] = \int_X \E_{\Pi_s} \l[  F(s,\Pi_s) \r] \mathrm{d}\rho_1(s)$.
\end{definition}

For a determinantal process  with kernel $K$, and any $s\in \Xi$ with $K(s,s)>0$, the Palm measure at $s$ exists and is again a determinantal process with kernel
\[
K_s(x,y)=K(x,y)-\frac{K(x,s)K(s,y)}{K(s,s)}.
\]
See Theorem~1.7 in \cite{shiraitakahashi}.

\subsection{Proof of Theorem \ref{compact}}



\begin{proof}
 We will show that $\Pi$ and the Palm measure of $\Pi$ at the origin can be coupled so that they are equal with positive probability. This will show that, with positive probability, a point can suitably be removed or added to $\Pi$ without detection, establishing that there cannot be rigidity of the number of points. In fact, we show that the Palm measure $\wt{\Pi}_n$ of $\Pi_n$ at the origin and $\Pi_{n-1}$ can be coupled so that they are equal with a positive probability, that stays bounded away from 0 as $n \to \infty$. Then, using the fact that $\Pi_n \to \Pi$ and $\wt{\Pi}_n \to \wt{\Pi}$ weakly, we can draw the desired conclusion about $\Pi$.

 Note that the process $\Pi_n$ is determinantal with kernel $K_n$ and hence contains $n+1$ points,  almost surely. The joint density of  the points of $\Pi_n$ (when placed in uniform random order)  is given by \[p_n(z_1,\cdots,z_{n+1})=C c_0 c_1 \cdots c_n |\Delta(z_1,\cdots,z_{n+1})|^2,\] where $\Delta(z_1,\cdots,z_{n+1})$ is, as usual, the Vandermonde determinant formed by $z_1,\cdots,z_{n+1}$. The Palm measure $\wt{\Pi}_n$ of $\Pi_n$ at the origin is a determinantal point process with background measure $\gamma$ and kernel given by $\wt{K}_n(z,w)=K_n(z,w)-c_0$. Thus, $\wt{\Pi}_n$ has a density given by $\wt{p}_n(z_1,\cdots,z_n)=C c_1 \cdots c_n|z_1|^2\cdots|z_n|^2  |\Delta(z_1,\cdots,z_n)|^2$. Then the Radon Nikodym derivative of $\wt{\Pi}_n$ w.r.t. $\Pi_{n-1}$ is given by \[\t_n(z_1,\cdots,z_n)=\frac{\wt{p}_n(z_1,\cdots,z_n)}{p_{n-1}(z_1,\cdots,z_n)}= c_n |z_1|^2 \cdots |z_n|^2.\]

 The maximal probability, under any coupling of $\wt{\Pi}_n$ and $\Pi_{n-1}$, that the coupled random variables are equal is $\E_{\Pi_{n-1}}\l[ \t_n(z_1,\cdots,z_n) \wedge 1  \r]$. Therefore, our goal is to show that $\E_{\Pi_{n-1}}\l[ \t_n(z_1,\cdots,z_n) \wedge 1  \r]$ is bounded away from 0 as $n \to \infty$. One can then take a subsequential weak limit of the finite dimensional couplings and obtain a coupling of $\Pi$ and $\wt{\Pi}$ with the desired property. We will show that, under the growth assumption on the moments as described in the statement of the theorem, $  \t_n(z_1,\cdots,z_n)= c_n |z_1|^2 \cdots |z_n|^2 \to Z$ a.s., where $Z>0$ with positive probability. 
 
To this end, we consider $W$ to be a random variable distributed according to the probability measure $\mu(\cdot)/\mu(\C)$. Then we set $\Gamma_1 = |W|^2$, and each $\Gamma_k$ is a size-biased version of $\Gamma_{k-1}$, i.e. the corresponding probability measures satisfy $d \Gamma_k = \frac{c_k}{c_{k-1}} d \Gamma_{k-1}$. Theorem Theorem~4.7.1 in \cite{HKPV} essentially states that, if $(z_1,\ldots,z_n) \sim \Pi_{n-1}$, then  $\{|z_i|^2\}_{i=1}^n$ has the same distribution as $\{|\Gamma_i|\}_{i=1}^n$, where the $\Gamma_i$-s are independent copies of the random variables described above.

Thus, observe that we can write, using Theorem~4.7.1 in \cite{HKPV},
 \[c_n |z_1|^2 \cdots |z_n|^2 =  c_0 \prod_{i=1}^n \frac{|z_i|^2}{(c_{i-1}/c_i)} =  c_0 \prod_{j=1}^n\frac{\Gamma_j}{\mu_j}= c_0  \prod_{j=1}^n\l(1+\l(\frac{\Gamma_j}{\mu_j}-1\r)\r),\] where $\Gamma_j$-s are independent copies of the size biased random variables. discussed above. 
 
 Thus, it suffices to show that $\l|\sum_{j=1}^{\infty}\l(\frac{\Gamma_j}{\mu_j}-1\r) \r|<\infty$. By the Kolmogorov's Two Series Theorem (c.f. \cite{Durrett} Chap. 2), this will be true if  $\sum_{j=1}^{\infty}\mathrm{Var}\l(\frac{\Gamma_j}{\mu_j}-1\r) <\infty$. This is exactly the condition \[\sum_{j=1}^{\infty}\l( \frac{\mu_{j+1}}{\mu_j} -1 \r) < \infty.\] This is precisely growth criterion laid out in the statement of the theorem.
 \end{proof}

\subsection{Proof of Theorem \ref{dpprt}}
\label{dpprtproof}



We begin by recalling that   $\Pi$ is the d.p.p. generated by the $(K,\gamma)$, where $K$ is the  kernel given by \[K(z,w)=\sum_{j=0}^{\infty} c_j z^j\ol{w}^j.\] On the other hand,
$\Pi_n$ is the d.p.p. generated by the $(K_n,\gamma)$, where $K_n$ is the truncated kernel given by \[K_n(z,w)=\sum_{j=0}^{n} c_j z^j\ol{w}^j.\] The sequence of d.p.p.-s $\{\Pi_n\}_{n \ge 0}$ converges to the d.p.p. $\Pi$ in the sense of convergence of point processes (\cite{DV}).

We will study the variance of linear statistics of $\Pi$ and $\Pi_n$, and show that under certain conditions on the coefficients, the variance of linear statistics remains bounded even as the scaling factor goes to infinity. 
Invoking a result in \cite{GP} (c.f. Theorem 6.1 therein; for a statement see Theorem \ref{thm:GP} in Section \ref{sec:thiswillberewritten}), this would be enough to guarantee the rigidity of the number of points of $\Pi$ in any bounded open set. For a more detailed discussion on this matter, we refer the reader to the Proof of Theorem \ref{dpprt} later in this section. 

To prove Theorem \ref{dpprt}, we will first state and prove an estimate on the variance of linear statistics of $\Pi$.

\begin{proposition}
\label{est}
Let $\varphi$ be a compactly supported Lipschitz function, supported inside the disk $B(0;r)$ with Lipschitz constant $\kappa(\varphi)$. Let $\varphi_R(z):=\varphi(z/R)$. Suppose  that the growth conditions in the statement of Theorem \ref{dpprt} hold.

Then we have 
\begin{equation} \label{estimate} 
\mathrm{Var} \l(\int\varphi_R(z)\, d[\Pi_n](z)\r)\le B \kappa(\varphi)^2 + C \|\varphi\|^2_\infty \rho(R),
\end{equation} 
where $B,C$ are positive numbers and $\rho(R)$ is a quantity that $\to 0$ as $R \to \infty$, uniformly in $n$ and $\varphi$.
The same conclusion holds for $\Pi$ in place of $\Pi_n$.
\end{proposition}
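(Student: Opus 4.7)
The starting point is the standard variance identity for a projection determinantal process: since the operator associated with $K_n$ is the orthogonal projection in $L^2(\mu)$ onto $\mathrm{span}\{1,z,\ldots,z^n\}$, one has
\[
\var\l(\int \varphi_R\, d\Pi_n\r) = \frac12 \iint \bigl(\varphi_R(z)-\varphi_R(w)\bigr)^2 |K_n(z,w)|^2\, d\mu(z)\,d\mu(w).
\]
Because $\varphi_R\equiv 0$ outside $B(0;rR)$, the integrand vanishes unless at least one of $z,w$ lies in that ball, and by symmetry the variance is at most $2\int_{B(0;rR)} F_n(z)\,d\mu(z)$, where $F_n(z):=\int_{\C}|\varphi_R(z)-\varphi_R(w)|^2 |K_n(z,w)|^2\,d\mu(w)$. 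I will bound $F_n(z)$ by splitting $|\varphi_R(z)-\varphi_R(w)|^2$ into the Lipschitz estimate $\kappa(\varphi)^2 R^{-2}|z-w|^2$ (used for $w$ in a neighborhood of $z$) and the $L^\infty$ estimate $4\|\varphi\|_\infty^2$ (used in the complementary far region).

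For the Lipschitz piece, the inner integral reduces to $G_n(z):=\int_{\C}|z-w|^2 |K_n(z,w)|^2\,d\mu(w)$, which I compute in closed form by expanding $|K_n(z,w)|^2=\sum_{j,k}c_jc_k(z\bar w)^j(\bar z w)^k$ and using the orthogonality $\int z^j\bar z^k\,d\mu=c_j^{-1}\delta_{jk}$ (valid since $\mu$ is radial). After simplification using $c_{j-1}=c_j\mu_{j-1}$, we obtain
\[
G_n(z) = c_0\mu_0 + \sum_{j=1}^n c_j(\mu_j-\mu_{j-1})|z|^{2j} + c_n |z|^{2(n+1)}.
\]
Integrating this against $\mathbf 1_{B(0;rR)}\,d\mu(z)$ and setting $\tau_j(R):=c_j\int_{B(0;rR)}|z|^{2j}\,d\mu(z)\in[0,1]$, the bulk of the variance bound reduces to $\kappa(\varphi)^2 R^{-2}\sum_{j=1}^n (\mu_j-\mu_{j-1})\tau_j(R)$. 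The cutoff $\tau_j(R)$ transitions sharply from $\approx 1$ to $\approx 0$ near $j\sim J(R)$ with $\mu_{J(R)}\sim r^2R^2$, since under $d\mu$ the density $c_j|z|^{2j}$ concentrates at $|z|^2\approx \mu_j$. By telescoping, $\sum_j (\mu_j-\mu_{j-1})\tau_j(R) \lesssim r^2R^2$, and dividing by $R^2$ yields the required $B\kappa(\varphi)^2$ bound.

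The complementary $L^\infty$ piece, together with the truncation term $c_n|z|^{2(n+1)}$, combines to form the $\|\varphi\|_\infty^2 \rho(R)$ contribution. Controlling this requires showing that the off-diagonal decay of $|K_n(z,w)|^2$ is fast enough that the far-field contribution is $o(1)$ as $R\to\infty$, and that the truncation error in $\tau_j(R)$ not being exactly $\mathbf 1[j\le J(R)]$ is summable. Both are governed by the growth hypotheses $\sum_j\nu_j<\infty$ and $\sum_{j\le k}\mu_j^4\nu_j=o(\mu_k^4)$, which should be interpreted as higher-order regularity of $(\mu_j)$ that quantifies the sharpness of the $\tau_j$-transition and the kernel's off-diagonal decay. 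The resulting bound is uniform in $n$; the $\Pi$ case then follows by weak convergence $\Pi_n\to\Pi$. The main obstacle is the quantitative moderate-deviation estimate for the mass of $|z|^{2j}d\mu$ outside $B(0;rR)$ and the matching of the resulting boundary errors to the $\nu_j$-based hypotheses.
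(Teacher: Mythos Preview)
Your approach begins correctly and the closed-form computation of $G_n(z)=\int_{\C}|z-w|^2|K_n(z,w)|^2\,d\mu(w)$ is right. However, there is a real gap in the telescoping step, and it stems precisely from the choice to integrate $w$ over all of $\C$ rather than truncating.

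The claim ``by telescoping, $\sum_j(\mu_j-\mu_{j-1})\tau_j(R)\lesssim r^2R^2$'' does not follow from the hypotheses. Split the sum at $J_*=\max\{j:\mu_j\le 2r^2R^2\}$. The piece $j\le J_*$ telescopes to at most $\mu_{J_*}\le 2r^2R^2$, fine. For $j>J_*$ the only control on $\tau_j=\P[X_j\le r^2R^2]$ available from the hypotheses is the fourth-moment Chebyshev bound $\tau_j\le C\nu_j$, and then you need $\sum_{j>J_*}(\mu_j-\mu_{j-1})\nu_j\lesssim R^2$ uniformly in $n$. Neither $\sum_j\nu_j<\infty$ nor $\sum_{j\le k}\mu_j^4\nu_j=o(\mu_k^4)$ implies this: for $\mu_j\sim j^a$ one has $\nu_j\sim j^{-2}$ and $(\mu_j-\mu_{j-1})\nu_j\sim j^{a-3}$, so for $a\ge 2$ the partial sums grow with $n$ and the bound fails. (For concrete $\mu$ the tail $\tau_j$ may in fact decay super-exponentially, but that is extra information not encoded in the $\nu_j$ assumptions.) Relatedly, placing the $c_n|z|^{2(n+1)}$ term and the ``truncation error in $\tau_j$'' into the $\|\varphi\|_\infty^2\rho(R)$ part is dimensionally inconsistent: those terms arise after applying the Lipschitz bound and carry a factor $\kappa(\varphi)^2/R^2$, not $\|\varphi\|_\infty^2$.

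The paper avoids this by truncating \emph{both} variables: on $A_1(R)=\{|z|,|w|\le aR\}$ the Lipschitz bound is used, and because every moment that appears is truncated at $a^2R^2$, the telescoping identity produces a sum of the form $\sum_j \E[\Gamma_j\mathbf 1_{\{\Gamma_j\le a^2R^2\}}]\bigl(\P[\Gamma_j\le a^2R^2]-\P[\Gamma_{j+1}\le a^2R^2]\bigr)$, in which each expectation is at most $a^2R^2$ and the probability increments telescope to at most $1$. This gives the $B\kappa(\varphi)^2$ term \emph{without} invoking the growth hypotheses at all. The $L^\infty$ bound is then applied on the genuinely off-diagonal region $A_2(R)=\{|z|\le R,\,|w|\ge aR\}\cup\{|w|\le R,\,|z|\ge aR\}$, yielding $\rho(R)=\sum_j\P[\Gamma_j\le R^2]\P[\Gamma_j\ge a^2R^2]$; it is \emph{this} sum that the $\nu_j$-hypotheses are designed to send to zero. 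Your sketch correctly identifies that a near/far split and moderate-deviation input are needed, but places the split in the wrong variable; restricting $w$ to $|w|\le aR$ before computing the radial expansion is the missing ingredient.
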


To prove Proposition \ref{est}, we will make use of a general fact about determinantal point processes:
\begin{lemma}
\label{varlinst}
Let $\Pi$ be a determinantal point process with Hermitian kernel $K$. Let $K$ be a reproducing kernel with respect to its background measure $\gamma$, i.e., $K(x,y)= \int K(x,z)K(z,y)\, \mathrm{d}\gamma(z)$ for all $x,y$. Let $\varphi, \psi$ be  compactly supported continuous functions.  Then we have
\[\mathrm{Cov} \l( \int \varphi \, d\Pi, \int \psi \, d\Pi \r) = \frac{1}{2}\iint (\varphi(z) - \varphi (w)) \overline{(\psi(z)-\psi(w))}|K(z,w)|^2 \, \mathrm{d}\gamma(z) \, \mathrm{d}\gamma(w).\]
\end{lemma}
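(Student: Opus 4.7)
The plan is to reduce the covariance identity to a routine manipulation of the first two joint intensities of $\Pi$, using the reproducing property of $K$ to symmetrise the resulting expression. First I would write down the standard formula for the covariance of two linear statistics of a simple point process in terms of its intensities: using that for a determinantal process with Hermitian kernel one has $\rho_1(x) = K(x,x)$ and $\rho_2(x,y) = K(x,x)K(y,y) - |K(x,y)|^2$ (Hermiticity gives $K(x,y)K(y,x) = |K(x,y)|^2$), one obtains
\[
\mathrm{Cov}\!\left(\int \varphi\, d\Pi, \int \psi\, d\Pi\right) = \int \varphi(x)\overline{\psi(x)}\, K(x,x)\, d\gamma(x) - \iint \varphi(x)\overline{\psi(y)}\,|K(x,y)|^2\, d\gamma(x)\, d\gamma(y).
\]

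Next I would invoke the reproducing property. Setting $y = x$ in $K(x,y) = \int K(x,z)K(z,y)\, d\gamma(z)$ and using Hermiticity in the form $K(z,x) = \overline{K(x,z)}$ yields $K(x,x) = \int |K(x,z)|^2\, d\gamma(z)$. Substituting this into the first term of the covariance converts the single integral into a double integral against $|K(x,y)|^2$, so the covariance becomes
\[
\iint \bigl[\varphi(x)\overline{\psi(x)} - \varphi(x)\overline{\psi(y)}\bigr]\,|K(x,y)|^2\, d\gamma(x)\, d\gamma(y).
\]

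Finally I would symmetrise. Since $|K(x,y)|^2$ is symmetric under swapping $x \leftrightarrow y$, the same covariance equals the integral of $\bigl[\varphi(y)\overline{\psi(y)} - \varphi(y)\overline{\psi(x)}\bigr]|K(x,y)|^2$; averaging the two expressions produces the integrand $\tfrac{1}{2}(\varphi(x)-\varphi(y))\overline{(\psi(x)-\psi(y))}\,|K(x,y)|^2$, which is exactly the claim. The only technical point is to justify Fubini and the separate finiteness of each term, but compact support of $\varphi$ and $\psi$, together with the local trace-class hypothesis (which gives $\iint_{D\times D}|K(x,y)|^2\, d\gamma\, d\gamma = \mathrm{tr}(\mathcal{K}_D^2) \le \mathrm{tr}(\mathcal{K}_D) < \infty$ on any compact $D$ containing the supports), makes everything absolutely convergent. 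There is no real obstacle here; the identity is a purely algebraic consequence of the reproducing property once the intensity formulas are in hand.
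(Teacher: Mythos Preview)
Your argument is correct and is exactly the standard derivation. The paper itself does not supply a proof of this lemma; it is stated as a ``general fact about determinantal point processes'' and used immediately in the proof of Proposition~\ref{est}. Your write-up would serve as a complete proof, and the Fubini justification you give (compact support together with $\int |K(x,y)|^{2}\,d\gamma(y)=K(x,x)$ from the reproducing property, and local trace class for the diagonal integral) is the right way to handle the only analytic subtlety.
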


\begin{proof} [\textbf{Proof of Proposition \ref{est}}]
We give the proof when $r=1$, from here the general case is obtained by scaling, because any function $\varphi$ supported on $B(0;r)$ is equal to the scaling $\Phi_r$ of some function $\Phi$ supported on $B(0;1)$ and having the same continuity and differentiability properties as $\varphi$. Notice that $(\Phi_r)_R=\Phi_{rR}$, so the result for $\varphi$ can be deduced from the result for $\Phi$. In what follows we deal with $\Pi_n$, the result for $\Pi$ follows, for instance, from taking limits as $n \to \infty$ for the result for $\Pi_n$.

Using Lemma \ref{varlinst}, we have \[\text{Var} \l(\int \varphi_R(z)\, d[\Pi_n](z) \r)= \frac{1}{2} \iint {|\varphi_R(z)-\varphi_R(w)|^2|K_n(z,w)|^2\, \mathrm{d}\gamma(z)\, \mathrm{d}\gamma(w)}\]
where $\gamma$ is the background measure.
Now, \[|\varphi_R(z)-\varphi_R(w)|^2 = |\varphi(z/R)-\varphi(w/R)|^2 \le \frac{1}{R^2}\kappa(\varphi)^2|z-w|^2. \]
Therefore, it suffices to bound the integral $\int_{A(R)} |\varphi_R(z)-\varphi_R(w)|^2 |K_n(z,w)|^2 \, \mathrm{d}\gamma(z) \, \mathrm{d}\gamma(w)$ on the set \[A(R):= \{ (z,w) : \min\{|z|, |w|\} \le R \} \] because outside $A(R)$, we have $\varphi_R(z)=\varphi_R(w)=0$.  

Thus, we may write
\begin{align*}
& \int_{A(R)} |\varphi(z/R)-\varphi(w/R)|^2 |K_n(z,w)|^2 \, \mathrm{d}\gamma(z) \, \mathrm{d}\gamma(w) \\ 
\le & \frac{\kappa(\varphi)^2}{R^2} \int_{A_1(R)} |z-w|^2 |K_n(z,w)|^2 \, \mathrm{d}\gamma(z) \, \mathrm{d}\gamma(w) + \|\varphi\|_\infty^2 \int_{A_2(R)} |K_n(z,w)|^2 \, \mathrm{d}\gamma(z) \, \mathrm{d}\gamma(w) 
\numberthis \label{eq:A1-A2}
\end{align*}
where, for  $a \ge 2$, we set \[A_1(R)=\{ |z|\le aR, |w| \le aR \} \text{ and } A_2(R)=\{ |z|\le R, |w| \ge aR \} \cup \{ |w|\le R, |z| \ge aR \}. \]


In \eqref{eq:A1-A2}, the integral over $A_1(R)$ will provide an upper bound of $B \kappa(\varphi)^2$, whereas the integral over $A_2(R)$ will be controlled by a quantity of the form $\|\varphi\|^2_\infty \rho(R)$, where 
$\rho(R) \to 0$ as $R \to \infty$, uniformly in $n$ and $\varphi$. The bounds for these two integrals will be taken up in Propositions \ref{prop:A1} and \ref{prop:A2} respectively.

Combining \eqref{eq:A1-A2} with Propositions \ref{prop:A1} and \ref{prop:A2}, we obtain the desired estimate on the variance of linear statistics as stated in Proposition \ref{est}.
\end{proof}

\begin{proposition}  \label{prop:A1}
Consider the integral 
\[ I_1(R,\varphi) := \frac{\kappa(\varphi)^2}{R^2} \int_{A_1(R)} |z-w|^2 |K_n(z,w)|^2 \, \mathrm{d}\gamma(z) \, \mathrm{d}\gamma(w)\] as in \eqref{eq:A1-A2}.
Then we have
\[ I_1(R,\varphi) \le  B \kappa(\varphi)^2 \]
for some positive number $B$.
\end{proposition}

\begin{proof}
We begin with
\begin{align*}
&\int_{A_1(R)} |z-w|^2 |K_n(z,w)|^2\, \mathrm{d}\gamma(z) \mathrm{d}\gamma(w)
\\& = \int \bigg( |z|^2-z\bar{w}-\bar{z}w+|w|^2 \bigg) \l(\sum_{j=0}^{n-1}c_j( z\bar{w})^j\r) \l(\sum_{j=0}^{n-1} c_j( \bar{z}w)^j\r) \, \mathrm{d}\gamma(z) \, \mathrm{d}\gamma(w). \numberthis \label{eq:integral}
\end{align*}
Now, we integrate the $|z-w|^2$ part term by term. Due to the radial symmetry of $\gamma$, only some specific terms from $|K_n(z,w)|^2$ contribute. For example, when we integrate the $|z|^2$ term in $|z-w|^2$, only the ${\displaystyle c_j (z\bar{w})^j c_j (\bar{z}w)^j}$,$0\le j\le n-1$ terms in the expanded expression for $|K_n(z,w)|^2$ contribute. When we integrate $z\bar{w}$, only the ${\displaystyle c_j(z\bar{w})^{j}c_{j+1} (\bar{z}w)^{j+1} }$,$0\le j\le{n-2}$ terms provide non-zero contributions. Due to symmetry between $z$ and $w$, it is enough to bound the contribution from $(|z|^2 - z \overline{w})$ by  $O(R^2)$, with the constant in the $O$ being independent of $n$.

In what follows, we will denote by $\mu$ the measure on $\R_+$ obtained by the push forward of the measure $\gamma$ from $\C$ to $\R_+$ under the map $z \mapsto |z|^2$.

$\bullet$ \underline{$|z|^2$ term: }

Let us introduce the change of variables  by $x=|z|^2$ and $y=|w|^2$. Then the contribution in the above integral \eqref{eq:integral} coming from \[ c_j (z\bar{w})^jc_j (\bar{z}w)^j\] can be written as \[ \int_0^{a^2R^2} \int_0^{a^2R^2} c_j x^{j+1} c_j y^{j} d\mu(x)d\mu(y). \] So, the total contribution due to all such terms, ranging from $j=0,\cdots,n-1$ is \[ \sum_{j=0}^{n-1}  \l( \int_0^{a^2 R^2} c_j x^{j+1} d\mu(x) \r) \l( \int_0^{a^2 R^2} c_{j+1} y^{j}d\mu(y) \r). \]

$\bullet$
 \underline{$z\bar{w}$ term: }

As above, the contribution coming from the \[ c_j(z\bar{w})^{j}c_{j+1}(\bar{z}w)^{j+1} \] is given by \[ \int_0^{a^2R^2} \int_0^{a^2R^2}c_j x^{j+1} c_{j+1} y^{j+1}d\mu(x)d\mu(y).  \] Therefore the total contribution from $0\le j\le{n-2}$ is
\[\sum_{j=0}^{n-2} \l( \int_0^{a^2 R^2} c_j  x^{j+1} d\mu(x) \r) \l( \int_0^{a^2R^2} c_{j+1} y^{j+1} d\mu(y) \r).\]

Recall that ${\displaystyle c_j x^jd\mu(x)}$ is a probability density, denote the corresponding random variable by $\Gamma_{j+1}$ (it may be seen that the random variable $\Gamma_j$ defined here, and that introduced in the proof of Theorem \ref{compact} are the same, using Theorem 4.7.1 in \cite{HKPV}; however, we do not require this fact for our arguments in this subsection). 

The contribution due to the $|z|^2$ term is ${ \displaystyle \sum_{j=0}^{n-1} \E[\Gamma_{j+1}{1\hskip-4pt{\rm 1}}_{(\Gamma_{j+1}\le a^2R^2)}]\P[\Gamma_{j+1}\le a^2R^2]}$ and that due to the $z\bar{w}$ term is
${ \displaystyle \sum_{j=0}^{n-2} \E[\Gamma_{j+1}{1\hskip-4pt{\rm 1}}_{(\Gamma_{j+1}\le a^2R^2)}]\P[\Gamma_{j+2}\le a^2R^2] }$.

$\bullet$ \underline{Combining the terms:}
The difference between the above two terms can be written as:
\begin{equation}
 \label{qty}
 \E[\Gamma_{n}{1\hskip-4pt{\rm 1}}_{(\Gamma_{n}\le a^2R^2)}]\P[\Gamma_{n}\le a^2R^2]+\sum_{j=1}^{n-1}\E[\Gamma_j{1\hskip-4pt{\rm 1}}{(\Gamma_j\le a^2R^2)}]\bigg(\P[\Gamma_j\le a^2R^2]-\P[\Gamma_{j+1}\le a^2R^2] \bigg).
\end{equation}
All the expectations in the above are $\le a^2R^2$, and $\P[\Gamma_j\le a^2R^2] \ge \P[\Gamma_{j+1}\le a^2R^2]$ because $\Gamma_{j+1}$ stochastically dominates $\Gamma_j$ (this is true because  a random variable is stochastically dominated by its size biasing). 

Therefore the absolute value of \eqref{qty}, by a telescopic sum, is bounded above by 
\begin{align*}
& \E[\Gamma_{n}{1\hskip-4pt{\rm 1}}_{(\Gamma_{n}\le a^2R^2)}]\P[\Gamma_{n}\le a^2R^2]+\sum_{j=1}^{n-1}\E[\Gamma_j{1\hskip-4pt{\rm 1}}{(\Gamma_j\le a^2R^2)}]\bigg(\P[\Gamma_j\le a^2R^2]-\P[\Gamma_{j+1}\le a^2R^2] \bigg) \\
\le & a^2R^2 \P[\Gamma_1\le a^2R^2] \\
\le & a^2R^2. \numberthis \label{eq:qty-1}
\end{align*}


$\bullet$ Finally, we  recall the definition of $I_1(R,\varphi)$ and combine with \eqref{eq:qty-1} to complete the proof of the proposition.

\end{proof}

\begin{proposition} \label{prop:A2}
Let 
\[I_2(R,\varphi):=)\|\varphi\|_\infty^2 \int_{A_2(R)} |K_n(z,w)|^2 \, \mathrm{d}\gamma(z) \, \mathrm{d}\gamma(w)   \]
and set
\begin{equation} \label{suffcond} 
\rho(R):= \sum_{j=0}^n \P\l[\Gamma_j \le R^2\r] \P\l[ \Gamma_j \ge a^2R^2 \r]. 
\end{equation}
Then $I_2(R,\varphi) \le C \|\varphi\|^2_\infty \rho(R)$ for some positive number $C$ and, furthermore, $\rho(R) \to 0$ uniformly in $n, \varphi$ as $R \to \infty$.
\end{proposition}

\begin{proof}
We proceed with our analysis of $I_2(R,\varphi)$ as in the proof of Proposition \ref{prop:A1}. In this vein, we write, 
\begin{align}  
 \|\varphi\|_\infty^2 \int_{A_2(R)}&  |K_n(z,w)|^2\, \mathrm{d}\gamma(z) \mathrm{d}\gamma(w) \nonumber \\
 &= \|\varphi\|_\infty^2 \int_{A_2(R)} \l(\sum_{j=0}^{n-1}c_j( z\bar{w})^j\r) \l(\sum_{j=0}^{n-1} c_j( \bar{z}w)^j\r) \, \mathrm{d}\gamma(z) \, \mathrm{d}\gamma(w). \label{eq:integral-1}
\end{align}

Due to the radial symmetry of $\gamma$, only some specific terms from $|K_n(z,w)|^2$ contribute to the above integral. To be more specific, only the ${\displaystyle c_j (z\bar{w})^j c_j (\bar{z}w)^j}$,$0\le j\le n-1$ terms in the expanded expression for $|K_n(z,w)|^2$ contribute. 


As in the proof of Proposition \ref{prop:A1}, we will denote by $\mu$ the measure on $\R_+$ obtained by the push forward of the measure $\gamma$ from $\C$ to $\R_+$ under the map $z \mapsto |z|^2$.

Thus, we may rewrite the right hand side of \eqref{eq:integral-1} as 
\begin{align*}  
& \int \l(\sum_{j=0}^{n-1}c_j( z\bar{w})^j\r) \l(\sum_{j=0}^{n-1} c_j( \bar{z}w)^j\r) \, \mathrm{d}\gamma(z) \, \mathrm{d}\gamma(w) \\
= &  \sum_{j=0}^{n-1} \l( \int_0^{ R^2} c_j  x^j d\mu(x) \r) \l( \int_{a^2R^2}^{\infty} c_{j} y^{j} d\mu(y) \r) + \sum_{j=0}^{n-1} \l(   \int_{a^2R^2}^{\infty} c_j  x^j d\mu(x) \r) \l( \int_0^{ R^2} c_{j} y^{j} d\mu(y) \r)
\numberthis \label{eq:integral-2}
\end{align*}
Due to reasons of symmetry,  the two terms on the right hand side of \eqref{eq:integral-2} are equal, and we need estimate only one of them. 

We recall from the proof of Proposition \ref{prop:A1} the random variable $\Gamma_{j+1}$, which corresponds to the probability measure on $\R_+$ given by $c_j x^j d\mu(x)$. As such, we may write
\[\rho(R) : =  \sum_{j=0}^{n-1} \l( \int_0^{ R^2} c_j  x^j d\mu(x) \r) \l( \int_{a^2R^2}^{\infty} c_{j} y^{j} d\mu(y) \r) = \sum_{j=0}^n \P\l[\Gamma_j \le R^2\r] \P\l[ \Gamma_j \ge a^2R^2 \r]. \] It suffices, therefore, to show that $\rho(R) \to 0$ uniformly in $n, \varphi$ as $R \to \infty$.


We consider the $\Gamma_j$-s in two groups: $J_1:=\{ j| \mu_j \le 2 R^2\}$ and $J_2:=\{j| \mu_j > 2 R^2\}$. Let $j_*:=\max\{j|j\in J_1\}$. Clearly,
\begin{align*}  \rho(R) = & \sum_{j=0}^n \P\l[\Gamma_j \le R^2\r] \P\l[ \Gamma_j \ge a^2R^2 \r] \\ \le & \l( \sum_{j \in J_1} \P\l[ \Gamma_j \ge a^2R^2 \r] \r) + \l( \sum_{j \in J_2} \P\l[ \Gamma_j \le R^2 \r] \r). \end{align*}
Observe that, for any $j$ we have
 \[ \E[\Gamma_j]=\mu_j; \quad  \mathrm{Var}\l( \Gamma_j/\mu_j \r)= \l( \frac{\mu_{j+1}}{\mu_j}-1 \r)\] 
 and 
 \begin{equation} \label{eq:nu_j}
 \E\l[\l(\frac{\Gamma_j}{\mu_j}-1\r)^4\r] =\l( \frac{\mu_{j+3}\mu_{j+2}\mu_{j+1}}{\mu_j^3} - 4 \frac{\mu_{j+2} \mu_{j+1}}{\mu_j^2} + 6 \frac{\mu_{j+1}}{\mu_j} -3 \r) = \nu_j .
 \end{equation} 

By  Chebyshev-type  inequality, we have 
\begin{align*} 
& \P\l[ \Gamma_j \ge a^2R^2  \r] \le \frac{\E\l( \Gamma_j/\mu_j -1 \r)^4}{(a^2R^2/\mu_j -1)^4}  \\ 
& \Rightarrow \sum_{j \in J_1} \P\l[ \Gamma_j \ge a^2R^2  \r] \le \sum_{j \in J_1}  \frac{\E\l( \Gamma_j/\mu_j -1 \r)^4}{(a^2R^2/\mu_j -1)^4}. 
\end{align*} 

We recall that $a \ge 2$, so that $\frac{a^2}{2} \ge 2$. 
 For $j \in J_1$ (recall that $J_1:=\{ j| \mu_j \le 2 R^2\}$), we have $\frac{a^2R^2}{\mu_j} \ge \frac{a^2}{2} \ge 2 ~\forall j \in J_1$. Since $|x-1|\ge x/2 ~\forall x \ge 2$, we deduce that  
    $|\frac{a^2R^2}{\mu_j}-1|\ge  \frac{a^2R^2}{2 \mu_j} \forall j \in J_1 $.  
    
Combining with \eqref{eq:nu_j}, We may therefore conclude that for each $j \in J_1$ we have    \[  \frac{\E\l( \Gamma_j/\mu_j -1 \r)^4}{(a^2R^2/\mu_j -1)^4} \le c \cdot  \mu_j^4\nu_j  \big/ (a^2R^2)^4 \] for some constant $c>0$. 
    
    Hence we have, for some constant $C>0$,
\begin{align*} & \sum_{j \in J_1} \P\l[ \Gamma_j \ge a^2R^2  \r] \\ & \le C \cdot \frac{1}{(a^2R^2)^4}\sum_{j \in J_1} \mu_j^4\nu_j \\ & =  C \cdot \frac{\mu_{j_*}^4}{(a^2R^2)^4}\frac{1}{\mu_{j_*}^4} \l\{ \sum_{j \le j_*} \mu_j^4\nu_j\r\} . \end{align*}
Notice that, by definition of $J_1$, we have $\frac{\mu_{j_*}^4}{(a^2R^2)^4} \le 16/a^8$.  By our assumption on the moment sequence, $\l\{ \sum_{j \le j_*} \mu_j^4\nu_j\r\}=o(\mu_{j_*}^4)$ as $j_* \to \infty$, i.e. as $R \to \infty$ (since $\mu_j \to \infty$ as $j \to \infty$). 


On the other hand,  we note for $j \in J_2$ that we have
 \[ \P\l[\Gamma_j \le R^2\r] \le \frac{\E\l( \Gamma_j/\mu_j -1 \r)^4}{(R^2/\mu_j -1)^2} . \]
 Since $\mu_j \ge 2R^2$, we have $|\frac{R^2}{\mu_j}-1|\ge \frac{1}{2}$. As a result, we have \[ \sum_{j \in J_2} \P\l[ \Gamma_j \le R^2  \r] \le C \sum_{j \in J_2} \E\l( \Gamma_j/\mu_j -1 \r)^4  = C \sum_{j > j_*} \nu_j,\] which $\to 0$ as $j_* \to \infty$ (equivalently, as $R \to \infty$) by our assumption on the moment sequence $\{\nu_j\}_j$.

Thus, $\sum_{j=0}^n \P\l[\Gamma_j \le R^2\r] \P\l[ \Gamma_j \ge a^2R^2 \r] \to 0$ as $R \to \infty$, uniformly in $n$ and $\varphi$. This completes the proof of the proposition. 

\end{proof}


We are now ready to complete the proof of Theorem \ref{dpprt}.

\begin{proof}[Proof of Theorem \ref{dpprt}]
 Let $D$ be a bounded open set in $\C$. We will first establish the rigidity of the number of points in $D$.

 To do so, for any $\eps>0$, we will construct a function $\Psi_{\eps}$ with the following properties:
 \begin{itemize}
  \item $\Psi_{\eps} \equiv 1$ on $D$
  \item $\| \Psi_{\eps} \|_\infty \le 1$
  \item $\kappa(\Psi_\eps) < \sqrt{\eps}$.
\end{itemize}
Then, considering the dilations $(\Psi^{\eps})_R$, we get via Proposition \ref{est} that  for large enough $R$ we have
\begin{itemize}
  \item $(\Psi_{\eps})_R \equiv 1$ on $D$
  \item $\| (\Psi_{\eps})_R \|_\infty \le 1$
  \item $\Var\l( \int (\Psi^{\eps})_R d[\Pi] \r) < C \eps $.
\end{itemize}
Appealing to Theorem 6.1 in \cite{GP}, we deduce that the number of points in $D$ is rigid.

Let $r_0$ denote the radius of the disk $D$. We construct the function $\Psi_{\eps}$ as follows:
\[\Psi_{\eps}(r) = \begin{cases}
                           1 & \text{ for $0\le r \le r_0$}, \\
                           -\eps \log r + \eps \log r_0 +1  & \text{ for $r_0 \le r \le r_0\exp(1/\eps)$}, \\
                           0 & \text{ for $r \ge r_0\exp(1/\eps)$ }.
                          \end{cases}
  \]
It is a simple computation to check that $\Psi_{\eps}$ defined above has the desired properties.

\end{proof}

%
%
%


\section{Perturbed lattice in one dimension}
\label{lattice}
Consider $\Pi=\{X_{k} \ : \ k\in \Z\}$ where $X_{k}\sim N(k,\sigma_{k}^{2})$ are independent random variables. We want to consider the special case of $\sigma_{k}^{2}=k^{2\beta}$ and understand the issue of rigidity and tolerance, depending on the parameter $\beta$.

\underline{${\beta>1/2: }$}

For simplicity of notation, we work with perturbed $\N$, i.e., $\{X_{k}\text{ such that } k\ge 1\}$. The proof for the case of $\Z$ (instead of $\N$) is similar. Let $\mu=\mathcal L(X_{1},X_{2},\ldots)$ and $\nu=\mathcal L(X_{2},X_{3},\ldots)$, where by the notation $\mathcal{L}(\underline{Y})$ we denote the law of the random sequence $\underline{Y}$. We claim that $\mu$ and $\nu$ are mutually absolutely continuous and hence, the process $\Pi$ is insertion tolerant (that is, given any bounded domain $D$ and the point configuration  $ \Pi$ we can introduce a new point in $D$ and still remain in the support of $\Pi$). In particular, this implies that there is no rigidity of numbers in $\Pi$. To see this, note that the function mapping the sequence $(X_1,X_2,\cdots,\infty)$ to the point set $\{X_1,X_2\cdots,\infty\}$ is measurable, hence $\mu \equiv \nu$ implies $\Pi \equiv \widetilde{\Pi}$ (where $\widetilde{\Pi}$ is the point process corresponding to $(X_2,\cdots,\infty)$), hence these have the same support. But by a trivial coupling, $\Pi$ contains one more point than the corresponding realization of $\widetilde{X}$, which (being a Gaussian perturbation of $1$) could be inside any given interval in $\R$ with positive probability . Since the distributions of $\Pi$ and $\widetilde{X}$ have the same support, we conclude that the point process $\Pi$ is insertion tolerant in the sense described above.

To compare $\mu$ and $\nu$, we will invoke Kakutani's Dichotomy Theorem. Fix $k$ and let $\mu=k$, $\mu'=k+1$, $\sigma=k^{\beta}$ and $\sigma'=(k+1)^{\beta}$. Then, the Radon-Nikodym derivative of $X_{k}$ to $X_{k+1}$ is
\begin{align*}
f_{k}(x) &= \frac{\sigma'}{\sigma}\exp\l\{-\half\l[\frac{(x-\mu)^{2}}{\sigma^{2}}-\frac{(x-\mu')^{2}}{\sigma'^{2}} \r]\r\}.
\end{align*}
Therefore,
\begin{align*}
\int \sqrt{f_{k}}d\nu &= \sqrt{\frac{\sigma'}{\sigma}}\frac{1}{\sigma'\sqrt{2\pi}}\int \exp\l\{-\frac{1}{4}\l[\frac{(x-\mu)^{2}}{\sigma^{2}}-\frac{(x-\mu')^{2}}{\sigma'^{2}} \r]\r\} \exp\l\{-\half \frac{(x-\mu')^{2}}{\sigma'^{2}}\r\} dx \\
&= \frac{1}{\sqrt{\sigma\sigma'} \sqrt{2\pi}} \int \exp\l\{-\frac{1}{4}\l[\frac{(x-\mu)^{2}}{\sigma^{2}}+\frac{(x-\mu')^{2}}{\sigma'^{2}} \r]\r\} dx \\
&= \frac{1}{\sqrt{\sigma\sigma'} \sqrt{2\pi}} \int \exp\l\{-\frac{1}{4\sigma^{2}\sigma'^{2}}(ax^{2}+bx+c)\r\}dx
\end{align*}
where $a=\sigma^{2}+\sigma'^{2}$, $b=-2\mu\sigma'^{2}-2\mu'\sigma^{2}$ and $c=\mu^{2}\sigma'^{2}+\mu'^{2}\sigma^{2}$. Note that $\int e^{-\half(Ax^{2}+Bx+C)}dx = \sqrt{\frac{2\pi}{A}}e^{(B^{2}-4AC)/2A}$. In our case, $A=a/2\sigma^{2}\sigma'^{2}$, $B=b/2\sigma^{2}\sigma'^{2}$ and $C=c/2\sigma^{2}\sigma'^{2}$ and hence
\begin{align*}
\int \sqrt{f_{k}}d\nu &= \frac{1}{\sqrt{\sigma\sigma'}\sqrt{2\pi}}\frac{\sqrt{2\pi}\sqrt{2\sigma^{2}\sigma'^{2}}}{\sqrt{a}}\exp\l\{-\frac{(\mu-\mu')^{2}}{\sigma^{2}+\sigma'^{2}}\r\} \\
&= \sqrt{\frac{2\sigma\sigma'}{\sigma^{2}+\sigma'^{2}}}\exp\l\{-\frac{(\mu-\mu')^{2}}{\sigma^{2}+\sigma'^{2}}\r\} \\
&= \sqrt{1-\frac{(\sigma-\sigma')^{2}}{\sigma^{2}+\sigma'^{2}}}\exp\l\{-\frac{(\mu-\mu')^{2}}{\sigma^{2}+\sigma'^{2}}\r\}.
\end{align*}
Plugging the values of $\mu=k$, $\sigma=k^{\beta}$ etc., we get
\begin{align*}
\int \sqrt{f_{k}}d\nu &= \sqrt{1-O\l(\frac{1}{k^{2}}\r)}\exp\l\{-O\l(\frac{1}{k^{2\beta}}\r)\r\}
\end{align*}
from which it immediately follows that $\prod_{k}(\int \sqrt{f_{k}}d\nu) >0$. By Kakutani's Theorem, $\mu \equiv \nu$.



\underline{${\beta \le 1/2: }$}

For a compactly supported test function $h$, let $h_{L}(x)=h(x/L)$. Let $\sigma^{2}(h_{L})=\Var(\sum_{x\in \Pi}h_{L}(x))=\sum_{k\in \Z}\Var(h_{L}(X_{k}))$. We shall consider $h $ to be a  non-negative Lipschitz function compactly supported on $[-1,1]$ such that $h \equiv 1$ in a neighbourhood of the origin, and $\|h\|_\infty =1$. Then, $|h_{L}(x)-h_{L}(y)|\le \kappa(h) |x-y|/L$, where $\kappa(h)$ is the Lipschitz constant of $h$. Hence we have
$$
\Var(h_{L}(X_{k}))=\half \E[(h_{L}(X_{k})-h_{L}(Y_{k}))^{2}] \le \frac{\kappa(h)^2}{2L^{2}}\E[(X_{k}-Y_{k})^{2}]=\kappa(h)^2\frac{\sigma_{k}^{2}}{2L^{2}}.
$$
We use this bound for $|k|\le 10L$ and get
\begin{align}\label{eq:boundforupto10L}
\sum_{|k|\le 10L}\Var(h_{L}(X_{k})) \le \frac{\kappa(h)^2}{L^{2}}\sum_{k=1}^{10L}k^{2\beta}  \lesssim \kappa(h)^2 L^{-1+2\beta}.
\end{align}

 For $|k|>10L$, we note that $h_{L}(X_{k})-h_{L}(Y_{k})=0$ unless one of $X_{k}$ or $Y_{k}$ falls in the interval $[-L,L]$. In any case, the difference is bounded by $1$, because we have $0 \le h(x) \le 1$ for any $x$. Hence (for simplicity of notation, let $k$ be positive),
$$
\Var(h_{L}(X_{k}))=\half \E[(h_{L}(X_{k})-h_{L}(Y_{k}))^{2}] \le \P\{|X_{k}-k|\ge k-L\}\le e^{-(k-L)^{2}/2\sigma_{k}^{2}}.
$$
For $k>10L$, we see that $(k-L)^{2}\ge k^{2}/4$ while $\sigma_{k}^{2}=k^{2\beta}\le k$ and hence
\begin{align}\label{eq:boundbeyond10L}
\sum_{|k|>10L}\Var(h_{L}(X_{k})) \le e^{-L/10}.
\end{align}
From \eqref{eq:boundforupto10L} and \eqref{eq:boundbeyond10L} we see that $\sigma^{2}(h_{L})\le C \kappa(h)^2 L^{-1+2\beta}$ (for large enough $L$, depending on $h$).

Suppose we want to prove that the number of points of $\Pi$ in a bounded interval $D \subset \R$ is rigid. For $\eps>0$, we define $h_\eps$ as $\Psi_\eps$ in the proof of Theorem \ref{dpprt} in Section \ref{dpprtproof}. It is easy to see that, for large enough $L$ (depending on $\eps$), $\sigma^{2}(h_{L}) \le c\eps^2$. We can again invoke Theorem 6.2 from \cite{GP} to show that we have the desired rigidity of the number of points in $D$.

Now fix any interval $[-A,A]$ and condition on $\Pi\cap[-A,A]^{c}$. Find $L$ large enough so that $\sigma^{2}(h_{L})<\eps^{3}$. Coupled with Theorem 6.2 in \cite{GP}, this suffices to establish that the number of points in $A$ is rigid.

%
%

\underline{Strong tolerance for $\beta \le 1/2$:}

By looking at the random sequence $(X_1,X_2,\cdots,\infty)$, we notice that for each realization of the random sequence, the points that occur inside any bounded interval $A$ have a joint density (coming from the Gaussians). Hence could be in any other possible locations in $A$ (w.r.t. the Lebesgue measure) with positive probability density. This completes the proof for $\beta  \ge 1/2$.

\section{Appendix: Paradigms for Rigidity and Tolerance}\label{sec:thiswillberewritten} 
In this section,  we establish technical results pertaining to various notions related to rigidity and tolerance. 

We begin by recalling Theorem 6.1 from \cite{GP}, which has been demonstrated to be useful for our purposes in this paper. In fact, in what follows, we state and prove a slightly more general version of this theorem.
\begin{theorem}[Theorem 6.1 in \cite{GP}] \label{thm:GP}
 \label{rig}
Let $\Pi$ be a point process on second countable Hausdorff locally compact topological space $\Xi$, and let $D$ be a measurable  set in $\Xi$. Let $\mathcal{S}_D$ denote the space of locally finite point configurations on $D$.  Let $\varphi$ be a compactly supported measurable function on $\Xi$. Suppose that for any $1>\eps>0$, we have a compactly supported measurable function $\Phi^{\eps}$ such that $\Phi^{\eps}=\varphi$ on $D$, and $\mathrm{Var}\l( \sum_{z \in \Pi} \Phi^{\eps}(z)  \r) < \eps$. Then the function $T:\mathcal{S}_D \to \C$ given by $T(\Upsilon)=\sum_{z \in \Upsilon \cap D} \varphi(z)$ is rigid.
\end{theorem}

\begin{proof}
In the rest of this proof, for any locally finite point configuration $\Upsilon$ and a measurable function $\psi$ on $\Xi$, we will denote by $\int \psi \mathrm{d}[\Upsilon]$ the sum $\sum_{z \in \Upsilon} \psi(z)$.

Consider the sequence of functions $\Phi^{2^{-n}}, n \ge 1$. Note that $\E \l[ \int_{\Xi} \Phi^{2^{-n}} \mathrm{d}[\Pi] \r] = \int_{\Xi} \Phi^{2^{-n}} \mathrm{d} \rho_1 $ where $\rho_1$ is the first intensity measure of $\Pi$.  It follows from Chebyshev's inequality that \[ \P \l(  \l| \int_{\Xi} \Phi^{2^{-n}} \mathrm{d}[\Pi] - \E \l[ \int_{\Xi} \Phi^{2^{-n}} \mathrm{d}[\Pi] \r] \r| >  2^{-n/4} \r) \le 2^{-n/2}.\]  The Borel Cantelli lemma implies that with probability 1, as $n \to \infty$ we have \[\label{rig-1} \l| \int_{\Xi} \Phi^{2^{-n}} \mathrm{d}[\Pi] - \E \l[ \int_{\Xi} \Phi^{2^{-n}} \mathrm{d}[\Pi] \r] \r| \to 0. \]
But  \[ \int_{\Xi} \Phi^{2^{-n}} \mathrm{d}[\Pi] =  \int_{D} \Phi^{2^{-n}} \mathrm{d}[\Pi] +  \int_{D^c} \Phi^{2^{-n}} \mathrm{d}[\Pi]. \]
Thus we have, as $n \to \infty$  
\begin{equation} 
\label{rig-2}
\l| \int_{D} \Phi^{2^{-n}} \mathrm{d}[\Pi] +  \int_{D^c} \Phi^{2^{-n}} \mathrm{d}[\Pi] - \int_{\Xi} \Phi^{2^{-n}} \mathrm{d} \rho_1 \r| \to 0. 
\end{equation}
If we know $\Pi_{D^c}$, we can compute $ \int_{D^c} \Phi^{2^{-n}} \mathrm{d}[\Pi]$ exactly, also $\rho_1$ is known explicitly as a functional of the point process $\Pi$. Hence, from the limit in \eqref{rig-2}, a.s. we can obtain $\int_{D} \Phi^{2^{-n}} \mathrm{d}[\Pi] = \int_{D} \varphi \mathrm{d}[\Pi]$   as the limit
\[ \lim_{n \to \infty} \l( \int_{\Xi}  \Phi^{2^{-n}} \mathrm{d} \rho_1 - \int_{D^c} \Phi^{2^{-n}} \mathrm{d}[\Pi] \r) . \]
\end{proof}


We move on with a lemma that pertains to the nature of the domains for which rigidity of numbers fails to hold (c.f. \ref{rem:inclusion}).

\begin{lemma} \label{lem:inclusion}
Let $\Pi$ be a point process on a space $E$ such that the point count in a domain $U$, i.e. $N_U(\Pi)$, is not rigid. Then, for any domain $V \subset E$ such that $U \subseteq V$, the point count in {\color{red} $V$}, i.e. $N_V(\Pi)$, is also not rigid.
\end{lemma}
\begin{proof} Suppose $N_V(\Pi)$ is rigid. Write  $N_U(\Pi)=N_V(\Pi)-N_{V\cap U^c}(\Pi)$. The second term is clearly in $\sigma(\Pi_{U^c})$, while the rigidity of $N_V(\Pi)$ implies that the first is in $\sigma(\Pi_{V^c})$ which is a subset of $\sigma(\Pi_{U^c})$. Therefore, $N_U(\Pi)\in \sigma(\Pi_U^c)$, showing that $N_U(\Pi)$ must also be rigid.
\end{proof}

We now discuss the relationship between tolerance and strong tolerance. For that we state an equivalent condition for tolerance.
\begin{lemma}\label{lem:3versionsoftolerance} Consider the following statements.
\begin{enumerate}[(A)]
\item\label{eq:tlr} Let $(\Pi,D)$ be tolerant subject to $\{f_{1},\ldots ,f_{m}\}$.
\item\label{eq:mdrttlr} With positive probability, the conditional distribution of $\Pi_{D}$ given $\Pi_{D^{c}}$ is mutually absolutely continuous to the conditional distribution of $\Pi_{D}$ given $\sigma\{f_{1}(\Pi_{D}),\ldots ,f_{m}(\Pi_{D})\}$.
\item\label{eq:strngtlr} With probability one, the conditional distribution of $\Pi_{D}$ given $\Pi_{D^{c}}$ is mutually absolutely continuous to the conditional distribution of $\Pi_{D}$ given $\sigma\{f_{1}(\Pi_{D}),\ldots ,f_{m}(\Pi_{D})\}$.
\end{enumerate}
Then $\eqref{eq:strngtlr}\implies \eqref{eq:mdrttlr} \implies \eqref{eq:tlr}$.
\end{lemma}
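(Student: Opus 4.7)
The first implication $(\ref{eq:strngtlr}) \Rightarrow (\ref{eq:mdrttlr})$ is trivial, since any almost-sure event has positive probability.

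For $(\ref{eq:mdrttlr}) \Rightarrow (\ref{eq:tlr})$, abbreviate $\mathcal{G} := \sigma\{f_{1}(\Pi_{D}),\ldots,f_{m}(\Pi_{D})\}$. Let $\mu_{\mathrm{out}}(\omega, \cdot)$ and $\mu_{\mathcal{G}}(\omega, \cdot)$ denote regular versions of the conditional distribution of $\Pi_{D}$ given $\Pi_{D^{c}}$ and given $\mathcal{G}$, respectively, and let $E$ denote the event on which these two random probability measures are mutually absolutely continuous; by hypothesis $\P(E) > 0$. The plan is to establish the two inclusions $\mathcal{G} \subseteq \Rig_{D}$ and $\Rig_{D} \subseteq \mathcal{G}$ modulo $\P$-null sets by matching the supports of the two conditional distributions.

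For the forward inclusion, $\mu_{\mathcal{G}}(\omega, \cdot)$ is by construction supported on the $\mathcal{G}$-fiber $F_{\omega} := \{\theta \in \mathcal{S}_{D} : f_{i}(\theta) = f_{i}(\Pi_{D}(\omega))\ \forall i\}$. Mutual absolute continuity on $E$ forces $\mu_{\mathrm{out}}(\omega, \cdot)$ to have the same support and thus to be concentrated on $F_{\omega}$. Since $\mu_{\mathrm{out}}(\omega, \cdot)$ depends only on $\Pi_{D^{c}}(\omega)$, its support is $\sigma(\Pi_{D^{c}})$-measurable in $\omega$, and so $F_{\omega}$—and therefore each $f_{i}(\Pi_{D})$—is $\sigma(\Pi_{D^{c}})$-measurable on $E$. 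For the reverse inclusion, given $A \in \Rig_{D}$, write $\mathbf{1}_{A} = g(\Pi_{D^{c}}) = h(\Pi_{D})$ almost surely. Under $\mu_{\mathrm{out}}(\omega, \cdot)$, the random variable $h(\Pi_{D})$ is almost surely equal to the constant $g(\Pi_{D^{c}}(\omega))$; by mutual absolute continuity, $h$ is also $\mu_{\mathcal{G}}(\omega, \cdot)$-almost surely constant on $F_{\omega}$ with the same value. Hence $h(\Pi_{D})$ is a measurable function of $f_{1}(\Pi_{D}), \ldots, f_{m}(\Pi_{D})$ on $E$, i.e., $\mathcal{G}$-measurable there.

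The main obstacle is the propagation of these inclusions from the positive-probability event $E$ to the almost-sure identities needed for $(\ref{eq:tlr})$. Once the rigidity of each $f_{i}$ is secured on $E$ via the forward inclusion, the event $E$ itself becomes $\sigma(\Pi_{D^{c}})$-measurable, and the coincidence of the supports of $\mu_{\mathrm{out}}$ and $\mu_{\mathcal{G}}$ reduces to a property of the $\sigma(\Pi_{D^{c}})$-measurable map $\pi_{D^{c}} \mapsto \mathrm{supp}\,\mu_{\mathrm{out}}(\pi_{D^{c}}, \cdot)$. Combined with the fact that this support coincides with a $\mathcal{G}$-fiber on a $\sigma(\Pi_{D^{c}})$-positive set, one upgrades the forward inclusion to $\P$-a.s.\ validity, and the reverse inclusion then follows by applying the support-matching argument globally. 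This measurability upgrade is the technically delicate step; the essential mechanism—that mutual absolute continuity equates the null sets of the two conditional distributions and hence equates the functional dependencies encoded by $\mathcal{G}$ and $\Rig_{D}$—is already visible on $E$.
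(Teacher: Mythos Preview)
Your argument has a genuine gap at the ``propagation'' step. You correctly observe that on the event $E$ where mutual absolute continuity holds, the support of $\mu_{\mathrm{out}}(\omega,\cdot)$ coincides with the $\mathcal{G}$-fiber through $\Pi_D(\omega)$, and hence the values $f_i(\Pi_D)$ are determined by $\Pi_{D^c}$ \emph{on $E$}. But the claimed upgrade from $E$ to an almost-sure statement does not go through. Your assertion that ``$E$ itself becomes $\sigma(\Pi_{D^c})$-measurable'' is unjustified: the definition of $E$ involves $\mu_{\mathcal{G}}$, which is $\mathcal{G}$-measurable (hence $\sigma(\Pi_D)$-measurable), not $\sigma(\Pi_{D^c})$-measurable. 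Even granting that measurability, $\P(E)>0$ gives no mechanism to conclude $\P(E)=1$; there is no $0$--$1$ law available here. Concretely, condition (B) alone does \emph{not} force each $f_i$ to be rigid: take a three-point space with $\Pi_D=X$, $\Pi_{D^c}=Y$, where $Y=1$ forces $X=0$ but $Y=0$ leaves $X$ uniform on $\{0,1\}$, and set $f_1(X)=X$. At the outcome $(X,Y)=(0,1)$ both conditional laws equal $\delta_0$, so (B) holds, yet $f_1$ is plainly not rigid. Thus your strategy for the forward inclusion cannot be completed by a sharper measurability argument.

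The paper proceeds differently and does not attempt to derive rigidity of the $f_i$ from (B). Its argument is by contradiction and is aimed solely at the inclusion $\Rig_D\subseteq\mathcal{G}$: if there were a rigid $f$ with $f(\Pi_D)$ not $\mathcal{G}$-measurable, then with positive probability the conditional law of $f(\Pi_D)$ given $\mathcal{G}$ would be nondegenerate, while the conditional law given $\Pi_{D^c}$ is always degenerate (by rigidity of $f$); the paper declares this incompatible with (B). Your ``reverse inclusion'' paragraph has the same underlying mechanism (comparing level sets of a rigid function under the two conditional laws), but by arguing directly on $E$ you inherit the same local-to-global obstacle that already sank the forward inclusion. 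The paper's contradiction framing at least works with the rigid $f$ globally rather than only on $E$, which is the essential difference between the two approaches.
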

As \eqref{eq:strngtlr} is the same as strong tolerance and \eqref{eq:tlr} is the same as tolerance, in particular it follows that strong tolerance implies tolerance. The intermediate statement \eqref{eq:mdrttlr} is put in to show that there are other possibilities. Let us first give an example.
\begin{example} Let $\Pi_{1}$ be the infinite Ginibre ensemble. Let $\Pi_{2}$ be a Poisson process with  unit intensity on the plane. Let $\xi$ be a $\mbox{Ber}(1/2)$ random variable. Assume that $\Pi_{1},\Pi_{2},\xi$ are independent. Set $\Pi=\Pi_{1}$ if $\xi=0$ and $\Pi=\Pi_{2}$ is $\xi=1$. Then
$\Pi$ is a stationary point process.

If $D$ is a bounded set, then from $\Pi_{D^{c}}$ we can deduce $\xi$ and then, we may or may not be able to deduce the number of points in $D$ depending on the value of $\xi$. Hence $\Rig_{D}$ is trivial. Further, if $\xi=0$, then the conditional distribution of $\Pi_{D}$ given $\Pi_{D^{c}}$ is Poisson process on $D$. If $\xi=1$, the conditional distribution of $\Pi_{D}$ is supported on collection of $N$ points where $N$ is a function of $\Pi_{D^{c}}$.  Thus, $\Pi$ satisfies \eqref{eq:tlr} and \eqref{eq:mdrttlr} but not \eqref{eq:strngtlr}.
\end{example}
Playing the same game with Ginibre and GAF zeroes, we can get a stationary point process which is rigid for numbers, which is tolerant subject to numbers, but which is not strongly tolerant subject to numbers.

One might wonder if the point process in the above example is somehow unnatural. For instance, it is not ergodic. Here is an example to show that such a difference can occur even under ergodicity, mixing and tail triviality.
\begin{example} Let $\xi_{n}$, $n\in \Z$, be i.i.d. with $\P\{\xi_{n}=0\}=\P\{\xi_{n}=1\}=\frac12$. Define
\[
X_{n}=\begin{cases}
1 & \mbox{ if }\xi_{n}=1\mb{ or }\xi_{n-1}=\xi_{n+1}=1, \\
0 &\mbox{ otherwise}.
\end{cases}
\]
Then $(X_{n})_{n\in \Z}$ is stationary. Since $(X_{k})_{k\le n}$ and $(X_{k})_{k\ge n+3}$ are independent (a property known as $2$-dependent), $X$ is tail-trivial, ergodic and mixing in every sense.

Condition on $(X_{k})_{k\not= 0}$. If $X_{1}=X_{-1}=1$, then we can easily deduce that $X_{0}=1$. However, for any of the other three possible values for $(X_{1},X_{-1})$, the conditional distribution of $X_{0}$ gives positive probability to both $0$ and $1$. It should also be noted that $(X_{1},X_{-1})$ takes all four values with positive probability.

Regard $X$ as a point process on $\Z$ and let $D=\{0\}$. Then $\Rig_{D}$ is trivial, \eqref{eq:mdrttlr} holds, but not \eqref{eq:strngtlr}.
\end{example}
It is also possible to satisfy \eqref{eq:tlr} without satisfying \eqref{eq:mdrttlr}.


\begin{example} Let $\Omega=\{1,2\}\times \{1,2,3\}$ with uniform probability distribution. Let $X(i,j)=j$ and
$$
Y(1,2)=Y(1,3)=1, \;\; Y(2,1)=Y(2,3)=2, \;\; Y(1,1)=Y(2,2)=3.
$$
Then, $\sigma\{X\}\cap \sigma\{Y\}$ is trivial. However, for any $\omega\in \Omega$, the conditional distribution of $X$ given $Y$ is supported on exactly two of the points $1,2,3$.

To make an example involving point processes, let $(i,j)$ be sampled uniformly from $\Omega$. Given $(i,j)$, let $\Pi$ be a Poisson process on $\R$ with intensity $X(i,j)$ on $(-1,1)$ and intensity $Y(i,j)$ on $(-1,1)^{c}$. Then the process $\Pi$ satisfies \eqref{eq:tlr} but not \eqref{eq:mdrttlr}.
\end{example}

\begin{proof}[Proof of Lemma~\ref{lem:3versionsoftolerance}] We only need to prove that \eqref{eq:mdrttlr} implies \eqref{eq:tlr}.

If \eqref{eq:tlr} were not true, we could find $f:\S_{D}\mapsto \R$ such that
\begin{inparaenum}[(a)]
\item $f$ is rigid for $\Pi$,
\item there is no random variable measurable with respect to $\sigma\{f_{1}(\Pi_{D}),\ldots ,f_{m}(\Pi_{D})\}$ that is equal to $f(\Pi_{D})$ $a.s.$
\end{inparaenum}
In particular, $f(\Pi_{D})$ is not a constant and in fact, with positive probability, the conditional distribution of $f(\Pi_{D})$ given $\sigma\{f_{1}(\Pi_{D}),\ldots ,f_{m}(\Pi_{D})\}$ is not degenerate. Hence, with positive probability, the  conditional distribution of $\Pi_{D}$ given $\sigma\{f_{1}(\Pi_{D}),\ldots ,f_{m}(\Pi_{D})\}$ gives probability less than one to any level set of $f$. But the rigidity of $f$ implies that, almost surely, the conditional distribution of $\Pi_{D}$ given $\Pi_{D^{c}}$ is supported inside a level set of $f$.  This contradicts \eqref{eq:mdrttlr}.
\end{proof}

\section{Acknowledgements} 

S.G. was supported in part by the MOE grants R-146-000-250-133, R-146-000-312-114 and T2EP20121-0033. The authors would like to thank the anonymous referees for their careful reading of the manuscripts, and their insightful comments and suggestions that helped greatly in improving the presentation of the paper.

\begin{figure}[t]\label{fig:alphagafzerosets}
\centering
\includegraphics[scale=1]{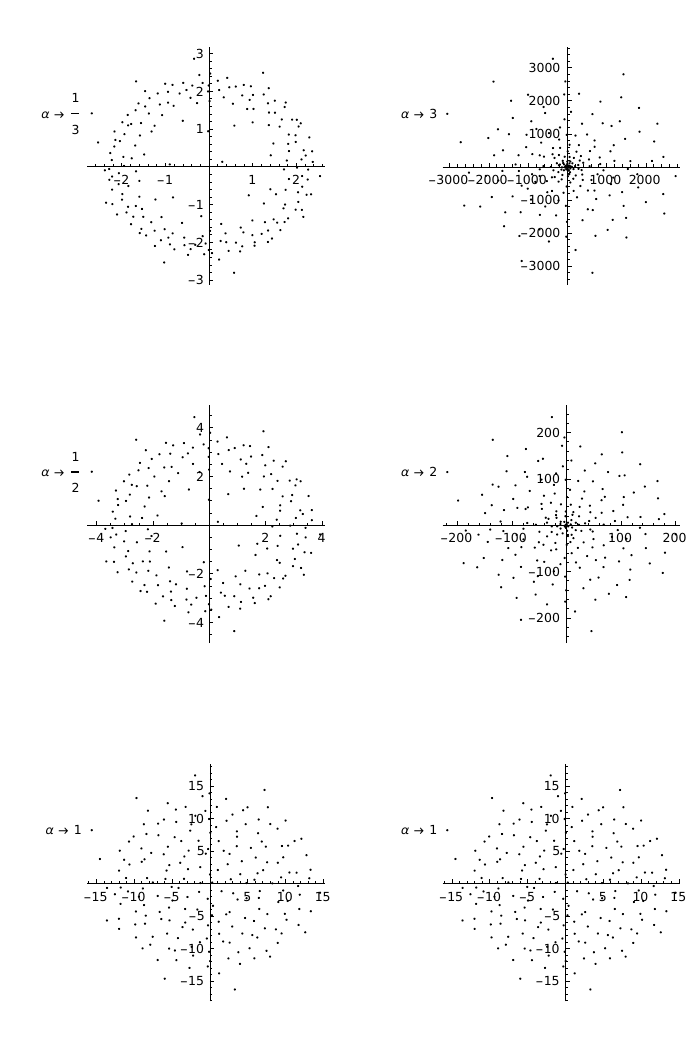}
\caption{Simulations of zeros of $\alpha$-GAFs for a few values of $\alpha$. The power series is truncated at $200$ terms.}
\end{figure}


\begin{thebibliography}{9}

\bibitem[And]{And}
Anderson, J.W., \emph{Hyperbolic geometry}, Springer Science \& Business Media, 2006.

\bibitem[AGL]{AGL}
Adhikari, K.,  Ghosh, S. and Lebowitz, J.L., \emph{Fluctuation and Entropy in Spectrally Constrained random fields}, Communications in Mathematical Physics (2021): 1-32.

\bibitem[BBK]{bbkbook}
Baccelli, Francois and Blaszczyszyn, Bartlomiej and Karray, Mohamed, \emph{Random Measures, Point Processes, and Stochastic Geometry}, Inria, \href{https://hal.inria.fr/hal-02460214/file/PointProcesses45.pdf}{Available online}, 2020.

\bibitem[BQ]{buf-1}
Bufetov, A. I. and  Qiu, Y., \emph{Determinantal point processes associated with Hilbert spaces of holomorphic functions}, Communications in Mathematical Physics 351, no. 1 (2017): 1-44.

\bibitem[BDQ]{buf-2}
Bufetov, A.I., Dabrowski, Y.,  and Qiu, Y., \emph{Linear rigidity of stationary stochastic processes}, Ergodic Theory and Dynamical Systems 38, no. 7 (2018): 2493-2507.

\bibitem[Buf-3]{buf-3}
Bufetov, A.I., \emph{Rigidity of Determinantal Point Processes with the Airy, the Bessel and the Gamma Kernel}, Bulletin of Mathematical Sciences 6, no. 1 (2016): 163-172.

\bibitem[BQS]{BQS}
Bufetov, A. I.,  Qiu,  Y., and Shamov, A., \emph{Kernels of conditional determinantal measures and the Lyons–Peres completeness conjecture}, Journal of the European Mathematical Society 23, no. 5 (2021): 1477-1519.

\bibitem[DaV]{DV}
Daley, D. J. and Vere Jones, D. \emph{An Introduction to the Theory of Point Processes (Vols. I \& II)}, Springer, 1997.

\bibitem[DeHLM]{DHLM}
Dereudre, D., Hardy, A., Leble, T. and Maida, M. \emph{DLR equations and rigidity for the Sine-beta process}, Communications on Pure and Applied Mathematics 74, no. 1 (2021): 172-222..

\bibitem[Du]{Durrett}
Durrett, R., \emph{Probability: theory and examples}, Vol. 49. Cambridge university press, 2019.

\bibitem[DM]{DymMackean}
Dym, H. and  McKean, H. P., \emph{Gaussian processes, function theory, and the inverse spectral problem}, Probability and Mathematical Statistics, Vol. 31. Academic Press.

\bibitem[Fe]{Fe}
Feldheim, N., \emph{Zeroes of Gaussian analytic functions with translation-invariant distribution}, Israel Journal of Mathematics 195, no. 1 (2013): 317-345.

\bibitem[Ge]{Ge}
H.-O. Georgii, \emph{Canonical and grand canonical Gibbs states for continuum systems}, Communications in Mathematical Physics 48, no. 1 (1976): 31-51.

\bibitem[G-I]{G-1}
Ghosh, S., \emph{Determinantal processes and completeness of random exponentials: the critical case}, Probability Theory and Related Fields 163, no. 3-4 (2015): 643-665.


\bibitem[G-II]{G-2}
Ghosh, S., \emph{Palm measures and rigidity phenomena in point processes}, Electronic Communications in Probability 21 (2016).

\bibitem[Gh-III]{G-3}
Ghosh, S., \emph{Quantitative estimates for rigidity and tolerance in generalised Gaussian analytic function zeros and related processes}, in preparation.

\bibitem[GhKr]{GhK}
Ghosh, S., and Krishnapur, M., \emph{Rigidity hierarchy in random point fields: random polynomials and determinantal processes}, arXiv preprint arXiv:1510.08814 (2015).

\bibitem[GhL]{GhL}
Ghosh, S., and Lebowitz, J.L., \emph{Generalized stealthy hyperuniform processes: Maximal rigidity and the bounded holes conjecture}, Communications in Mathematical Physics 363, no. 1 (2018): 97-110.


\bibitem[GP]{GP}
Ghosh, S.  and Peres, Y.,  \emph{Rigidity and Tolerance in point processes: Gaussian zeroes and Ginibre eigenvalues}, http://arxiv.org/abs/1211.2381, Duke Mathematical Journal 166, no. 10 (2017): 1789-1858.

\bibitem[GKP]{GKP}
Ghosh,  S., Krishnapur, M. and Peres, Y., \emph{Continuum Percolation for Gaussian zeroes and Ginibre eigenvalues}, http://arxiv.org/abs/1211.2514v1.


\bibitem[Gin]{Gin}
Ginibre, J., \emph{Statistical ensembles of complex, quaternion, and real matrices}, Journal of Mathematical Physics, (1965).


\bibitem[Go]{Go}
Goldman, A., The Palm measure and the Voronoi tessellation for the Ginibre process. Ann.
Appl. Probab. 20 (2010), 1, 90–128.

\bibitem[HiShTr]{HST}
Hiraoka, Y.,  Shirai, T.  and Trinh, K.D., \emph{Limit theorems for persistence diagrams}, The Annals of Applied Probability 28, no. 5 (2018): 2740-2780.

\bibitem[HS]{HS}
Holroyd, A. E. and Soo, T., \emph{Insertion and deletion tolerance of point processes}, Electron. J. Probab. 18, (2013), no. 74.

\bibitem[HKPV]{HKPV}
Hough, J. B., Krishnapur, M., Peres, Y. and Vir\'{a}g, B., \emph{Zeros of Gaussian Analytic Functions and Determinantal Point Processes}, Providence, RI, American Mathematical Society, (2009).


\bibitem[Ka]{Ka}
Kallenberg, O.,  \emph{Random Measures}, Akademie-Verlag $\cdot$ Berlin and Academic Press, 1983.

\bibitem[KiNi]{KiNi}
Kiro, A., and Nishry, A., \emph{Rigidity for zero sets of Gaussian entire functions}, Electronic Communications in Probability 24 (2019).

\bibitem[Lob]{Lob}
Lobachevskii, N.I., \emph{Pangeometry}, Vol. 4. European Mathematical Society, 2010.

\bibitem[Ly-I]{Ly}
Lyons, R.,  \emph{Determinantal Probability Measures}, Publ. Math. Inst. Hautes Etudes Sci., 98, 167-212, 2003.

\bibitem[Ly-II]{Lyons-ICM}
Lyons, R., \emph{Determinantal probability: basic properties and conjectures}, Proc. International Congress of Mathematicians 2014, Seoul, Korea, vol. IV, 137--161.


\bibitem[LySt]{LySt}
Lyons, R. and Steif, J.,  \emph{Stationary determinantal processes: Phase multiplicity, Bernoullicity, entropy, and domination}, Duke Math. J., Volume 120, Number 3 (2003), 515--575.

\bibitem[NS-I]{NS}
Nazarov, F. and Sodin, M., \emph{Random Complex Zeroes and Random Nodal Lines}, Proceedings of the International Congress of Mathematicians, Vol. III, 1450--1484, 2010.

\bibitem[NS-II]{NS1}
Nazarov, F. and Sodin, M., \emph{Fluctuations in Random Complex Zeroes: Asymptotic Normality Revisited}, Int. Math. Res. Not., Vol. 2011, No. 24, 5720--5759, 2011.

\bibitem[NSV]{NSV}
Nazarov, F., Sodin, M. and  Volberg, A., \emph{Transportation to random zeroes by the gradient flow. Geometric and Functional Analysis}, Vol 17-3, 887--935, 2007.

\bibitem[Os]{Os}
Osada, H.,  \emph{Interacting Brownian motions in infinite dimensions with logarithmic interaction potentials}, Annals of Probability, Vol.41, 2013.

\bibitem[OsSh]{OsSh}
Osada,  H. and Shirai, T., \emph{Absolute continuity and singularity of Palm measures of the Ginibre point process}, Probability Theory and Related Fields 165, no. 3 (2016): 725-770.

\bibitem[PS]{PS}
Peres, Y.  and Sly, A., \emph{Rigidity and tolerance for perturbed lattices}, arXiv preprint arXiv:1409.4490

\bibitem[Qi]{Qi}
Qiu, Y. \emph{Rigid stationary determinantal processes in non-Archimedean fields}, Bernoulli 25, no. 1 (2019): 75-88.

\bibitem[ST1]{ST1}
Sodin, M. and Tsirelson,  B., \emph{Random complex zeroes. I. Asymptotic normality}, Israel J. Math. {\bf 144} (2004), 125-149.

\bibitem[Sosh]{soshnikov}
Soshnikov, A., \emph{Determinantal random point fields},  Uspekhi Mat. Nauk {\bf 55}, (2000), no. 5(335), 107--160; translation in
Russian Math. Surveys {\bf 55} (2000), no. 5, 923-975

\bibitem[ShTa]{shiraitakahashi}
Shirai, T. and Takahashi, Y., \emph{Random point fields associated with certain Fredholm determinants I: fermion, Poisson and boson point processes}, Journal of Functional Analysis
 {\bf 205}, Issue 2, (2003), 414--463

\bibitem[Ter]{Terras}
Terras, A., \emph{Harmonic analysis on symmetric spaces -- Euclidean space, the sphere, and the Poincare upper half-plane}, Springer Science \& Business Media, 2013.

\bibitem[Tri]{Tripathi}
Tripathi, R.,  \emph{Determinantal Processes and Stochastic Domination}, arXiv preprint arXiv:2009.09141 (2020).


\end{thebibliography}
\end{document}